\theoremstyle{plain}
\newtheorem{lem}{Lemma}[section]
\newtheorem{cor}[lem]{Corollary}
\newtheorem{prop}[lem]{Proposition}
\newtheorem{thm}[lem]{Theorem}
\newtheorem{intthm}{Theorem}
\newtheorem*{mthm*}{Main Theorem}
\newtheorem*{mcor*}{Corollary}
\theoremstyle{definition}
\newtheorem{defn}[lem]{Definition}
\newtheorem{question}[lem]{Question}
\newtheorem{disc}[lem]{Remark}
\newtheorem{notn}[lem]{Notation}
\newtheorem{fact}[lem]{Fact}
\newtheorem{Proof}[lem]{Proof}
\newtheorem*{convention*}{Convention}
\newcommand{\Ht}{\operatorname{ht}}	
\newcommand{\depth}{\operatorname{depth}}	
\newcommand{\rank}{\operatorname{rank}}	
\newcommand{\edim}{\operatorname{edim}}
\newcommand{\soc}{\operatorname{Soc}}
\newcommand{\len}{\operatorname{length}}
\newcommand{\spec}{\operatorname{Spec}}
\newcommand{\im}{\operatorname{Im}}
\newcommand{\ideal}[1]{\mathfrak{#1}}
\newcommand{\m}{\ideal{m}}
\newcommand{\n}{\ideal{n}}
\newcommand{\p}{\ideal{p}}
\newcommand{\q}{\ideal{q}}
\newcommand{\fm}{\ideal{m}}
\newcommand{\fa}{\ideal{a}}
\newcommand{\fb}{\ideal{b}}
\newcommand{\fc}{\ideal{c}}
\newcommand{\comp}[1]{\widehat{#1}}
\newcommand{\wti}{\widetilde}
\newcommand{\Min}{\operatorname{Min}}
\newcommand{\xra}{\xrightarrow}
\newcommand{\onto}{\twoheadrightarrow}
\newcommand{\x}{\mathbf{x}}
\renewcommand{\geq}{\geqslant}
\renewcommand{\leq}{\leqslant}
\renewcommand{\ge}{\geqslant}
\renewcommand{\le}{\leqslant}
\newcommand{\Ext}[4][R]{\operatorname{Ext}_{#1}^{#2}(#3,#4)}
\newcommand{\Hom}{\operatorname{Hom}}
\newcommand{\ssm}{\smallsetminus}
\def\Ext{\operatorname{Ext}}
\newcommand{\ecodepth}{\operatorname{ecodepth}}
\newcommand{\EXT}[4][R]{\operatorname{Ext}_{#1}^{#2}(#3,#4)}
\newcommand{\HOM}[3][R]{\operatorname{Hom}_{#1}(#2,#3)}
\newcommand{\Nil}{\operatorname{Nil}}
\newcommand{\reg}{\operatorname{reg}}
\newcommand{\ulx}{\underline{X}}
\newcommand{\uly}{\underline{Y}}
\newcommand{\pseries}[2]{#1[\![#2]\!]}
\newcommand{\psy}[1][k]{\pseries{#1}{\uly}}
\newcommand{\psx}[1][k]{\pseries{#1}{\ulx}}
\newcommand{\psyx}[1][k]{\pseries{#1}{\uly,\ulx}}
\newcommand{\pyy}{k(\!(\underline{Y})\!)}
\numberwithin{equation}{lem}
\begin{document}

\bibliographystyle{amsplain}

\title[Decomposable maximal ideals]{Applications and homological properties of local rings with decomposable maximal ideals}

\author[S. Nasseh]{Saeed Nasseh}

\address{S. Nasseh, Department of Mathematical Sciences,
Georgia Southern University,
Statesboro, Georgia 30460, USA}

\email{snasseh@georgiasouthern.edu}
\urladdr{https://cosm.georgiasouthern.edu/math/saeed.nasseh}

\author[S. Sather-Wagstaff]{Sean Sather-Wagstaff}

\address{S. Sather-Wagstaff, Department of Mathematical Sciences,
Clemson University,
O-110 Martin Hall, Box 340975, Clemson, S.C. 29634
USA}

\email{ssather@clemson.edu}
\urladdr{https://ssather.people.clemson.edu/}

\author[R. Takahashi]{Ryo Takahashi}
\address{R. Takahashi, Graduate School of Mathematics\\
Nagoya University\\
Furocho, Chikusaku, Nagoya, Aichi 464-8602, Japan}
\email{takahashi@math.nagoya-u.ac.jp}
\urladdr{http://www.math.nagoya-u.ac.jp/~takahashi/}

\author[K. VandeBogert]{Keller VandeBogert}

\address{K. VandeBogert, Department of Mathematics,
University of South Carolina,
1523 Greene Street, Columbia, SC 29208 USA}

\email{kellerlv@math.sc.edu}

\thanks{Takahashi was partly supported by JSPS Grants-in-Aid
for Scientific Research 16K05098.}



\keywords{Auslander conditions;
decomposable maximal ideal;
fiber product;
finite Cohen-Macaulay type;
semidualizing modules}
\subjclass[2010]{
13C13, 
13C14, 
13D07,  
18A30} 

\begin{abstract}
We construct a local Cohen-Macaulay ring $R$ with a prime ideal $\mathfrak{p}\in\spec(R)$ such that $R$ satisfies the uniform Auslander condition (UAC), but the localization $R_{\mathfrak{p}}$ does not  satisfy Auslander's condition (AC). Given any positive integer $n$, we also construct a local Cohen-Macaulay ring $R$ with a prime ideal $\mathfrak{p}\in\spec(R)$ such that $R$ has exactly two non-isomorphic semidualizing modules, but the localization $R_{\mathfrak{p}}$ has $2^n$ non-isomorphic semidualizing modules. Each of these examples is constructed as a fiber product of two local rings over their common residue field.  Additionally, we characterize the non-trivial Cohen-Macaulay fiber products  of finite Cohen-Macaulay type.
\end{abstract}

\maketitle


\section{Introduction}\label{sec161010a}

Throughout this paper, let $(R,\m_R,k)$ be a (commutative noetherian) local ring.

\

Many invariants and properties of local rings  behave well with respect to localization.
For instance,  if 
$\p\in\spec(R)$, then $\dim(R_\p)\leq\dim(R)$; if moreover
$R$ is Cohen-Macaulay, then
$R_{\p}$ is Cohen-Macaulay
and the type of $R_\p$ is at most the type of $R$. 
On the other hand, some invariants
and properties can behave unpredictably, e.g., the depth of $R$ can increase, decrease, or not change under localization. 

The first point of this article is to construct two examples of local rings with interesting localization behavior with respect to homological 
properties, beginning with the following.

\subsection*{Auslander Conditions}
Section~\ref{sec171226a} is devoted to the following 
conditions
from~\cite{huneke:svegr}, motivated by a conjecture of Auslander 
from~\cite[p.~815]{MR1674397}
and~\cite[p.~1]{happel:hcrt}.
\begin{enumerate}[(U{A}C)]
\item[(AC)]
for every finitely generated 
$R$-module
$M$ there exists an integer
$b_M\geq 0$ such that, for every finitely generated 
$R$-module
$N$, if 
$\EXT iMN=0$ for  $i\gg 0$, then 
$\EXT iMN=0$ for all $i>b_M$.
\item[(UAC)]
there exists an integer
$b\geq 0$ such that,
for all finitely generated 
$R$-modules
$M$ and $N$, if 
$\EXT iMN=0$ for  $i\gg 0$, then 
$\EXT iMN=0$ for all $i>b$.
\end{enumerate}

The following theorem (proved in Proof~\ref{para180107a})
is the first main result of this paper.
It says that the conditions (AC) and (UAC) need not be preserved by localization.
One interpretation of the result is that these conditions
are poor measures of the severity of the singularity of $R$, even when $R$ is complete, since intuition says that
$R_\p$ should  be no more singular than $R$.
See also Theorem~\ref{thm180128a}.

\begin{intthm}\label{thm171226a}
Let $k$ be any field which is not algebraic over a finite field. 
There exists a complete local Cohen-Macaulay $k$-algebra $R$ of Krull dimension 1
with decomposable maximal ideal
such that 
\begin{enumerate}[\rm(a)]
\item
$R$ has type~2, 
multiplicity 13, and embedding codepth 6
and satisfies (UAC),
\item
$R$ is the completion of a local ring that is essentially of finite type over $k$, 
and
\item 
there is a prime ideal $\p\in\spec(R)$ such that the localization
$R_{\p}$ is Gorenstein of multiplicity 12 
and embedding codepth 5
and does not even satisfy (AC).
\end{enumerate}
\end{intthm}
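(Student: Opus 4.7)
The plan is to realize $R$ as the completion of a nontrivial fiber product $R' = S \times_k T$, where $S$ and $T$ are $1$-dimensional Cohen-Macaulay local $k$-algebras essentially of finite type with common residue field $k$. Any such fiber product automatically has decomposable maximal ideal $\m_{R'} = \m_S \oplus \m_T$, and the essential spectral fact is that the non-maximal primes of $R'$ biject with the non-maximal primes of $S$ together with those of $T$; moreover, for a prime $\p \in \spec(R')$ corresponding to $\q \in \spec(T) \setminus \{\m_T\}$, one has $R'_\p \cong T_\q$. These identifications persist through completion, so engineering the pathological localization $R_\p$ reduces to engineering a suitable localization of $T$.

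Concretely, apply a Jorgensen--\c{S}ega-type construction, available precisely because $k$ is not algebraic over a finite field, to produce a $0$-dimensional Gorenstein Artinian local $k$-algebra $A$ of embedding dimension $5$ and length $12$ failing (AC). Set $T = A[t]_{(\m_A,t)}$; since $t$ is regular on $A[t]$ and $T/(t) \cong A$ is Gorenstein Artinian, $T$ is a $1$-dimensional Gorenstein local $k$-algebra essentially of finite type, with unique minimal prime $\q$ equal to the extension of $\operatorname{Nil}(A)$. Then $T_\q$ is the faithfully flat base change of $A$ along $k \to k(t)$, hence remains Gorenstein Artinian of embedding codepth $5$ and multiplicity $12$, and faithful flatness transfers failure of (AC) from $A$ to $T_\q$.

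For $S$ take the simplest choice, the DVR $S = k[x]_{(x)}$, with $\edim(S) = e(S) = \type(S) = 1$. Then $R' = S \times_k T$ is $1$-dimensional, essentially of finite type, and Cohen-Macaulay: the defining exact sequence $0 \to R' \to S \oplus T \to k \to 0$ together with $\depth S, \depth T \geq 1$ forces $\depth R' \geq 1 = \dim R'$. Standard additivity formulas for fiber products give $\edim R' = 1 + 6 = 7$ (embedding codepth $6$), $\type R' = 1 + 1 = 2$, and $e(R') = 1 + 12 = 13$. Setting $R = \widehat{R'}$ preserves these invariants and yields parts (a) and (b); the prime $\p$ of $R$ tracking $\q$ from $T$ supplies part (c).

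The main obstacle, and technical heart of the argument, is verifying (UAC) for $R$. The plan is to invoke the Ext-vanishing theorems for nontrivial fiber products (developed in preceding sections, building on prior work of Nasseh--Sather-Wagstaff): over such an $R$, if $\Ext{i}{M}{N} = 0$ for $i \gg 0$ with $M, N$ finitely generated, then $\pd_R M$ or $\injdim_R N$ is finite. Since $R$ is Cohen-Macaulay of dimension $1$, the Auslander--Buchsbaum and Bass formulas force $\pd_R M \leq 1$ or $\injdim_R N \leq 1$, so $\Ext{i}{M}{N} = 0$ for all $i \geq 2$, giving (UAC) with uniform bound $b = 1$. Secondary verifications include confirming that Jorgensen--\c{S}ega-type examples can be chosen with the precise invariants quoted, and checking that the passage from $R'$ to $\widehat{R'}$ preserves the relevant prime-theoretic and numerical data; both steps are standard but warrant explicit justification.
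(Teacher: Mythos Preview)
Your construction is essentially the paper's: both adjoin a variable to the Jorgensen--\c{S}ega algebra $A$, fiber-product with a DVR, and cite the fiber-product Ext-vanishing results for (UAC); the only difference is that the paper builds $R = A[\![Y]\!] \times_k k[\![Z]\!]$ directly with power series, while you localize polynomial rings and complete at the end, which yields the same ring. One caution on the step you flagged for verification: after completing, the relevant localization $R_\p$ is $A \otimes_k k(\!(t)\!)$, not $T_\q = A \otimes_k k(t)$, so the prime of $R$ does not literally recover $T_\q$---but your faithful-flatness argument transfers failure of (AC) to this larger base change just as well, and this is precisely the computation the paper carries out.
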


\subsection*{Semidualizing Modules}
Section~\ref{sec170206a} focuses on the following modules introduced by
Foxby~\cite{foxby:gmarm}. 
A finitely generated $R$-module $C$ is \emph{semidualizing} if
one has $R\cong \HOM CC$ and $\EXT iCC=0$ for all $i\geq 1$.
For instance, the free module $R^1$ is semidualizing.
A \emph{dualizing module} for $R$ is a semidualizing $R$-module of finite injective dimension;
this is the same as Grothendieck's \emph{canonical module} over a Cohen-Macaulay ring from~\cite{hartshorne:lc}.

The next result is proved in Proof~\ref{para180107b}.
It say that the number of isomorphism classes of semidualizing modules can grow under localization.
As with the preceding result, one can interpret this as saying that the number of isomorphism classes of semidualizing $R$-modules
is a poor measure of the severity of the singularity of $R$.

\begin{intthm}\label{thm170508a}
Let $n$ be a positive integer, and let $k$ be a field.
Then there is a 
complete local Cohen-Macaulay $k$-algebra $R$ of Krull dimension 1 with decomposable maximal ideal
such that
\begin{enumerate}[\rm(1)]
\item \label{thm170508a0} 
$R$ is the completion of a local ring that is essentially of finite type over $k$,
\item\label{thm170508a1} $R$ has type $1+2^n$, multiplicity $1+3^n$, 
embedding codepth $2n+1$,
and 
only two non-isomorphic semidualizing modules, namely its free module of rank 1 and its dualizing module, and
\item\label{thm170508a2} $R$ has a prime ideal $\p$ such that  
$R_\p$
has 
type $2^n$, multiplicity $3^n$, 
embedding codepth $2n$,
and exactly
$2^n$ non-isomorphic semidualizing modules.
\end{enumerate}
\end{intthm}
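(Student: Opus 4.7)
The plan is to construct $R$ as the fiber product $A\times_k B$ over the common residue field, where I set $A:=T_n[[t]]$, $B:=k[[u]]$, $T_n:=T\otimes_k\cdots\otimes_k T$ ($n$ tensor factors), and $T:=k[[x,y]]/(x,y)^2$. The building block $T$ is Artinian local of embedding codepth~$2$, multiplicity~$3$, type~$2$, with exactly two non-isomorphic semidualizing modules ($T$ and its dualizing module). Invoking multiplicativity of semidualizing modules over tensor products of local $k$-algebras with residue field $k$, I would conclude that $T_n$ is Artinian local of embedding codepth $2n$, multiplicity $3^n$, type $2^n$, with exactly $2^n$ non-isomorphic semidualizing modules.

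Next I would verify that $R$ is a complete local Cohen-Macaulay $k$-algebra of dimension~$1$ with decomposable maximal ideal $\m_R=\m_A\oplus\m_B$: Cohen-Macaulayness follows since $(t,u)\in\m_R$ is a nonzerodivisor, and part~\eqref{thm170508a0} holds because both $A$ and $B$ are completions of essentially of finite type $k$-algebras and completion commutes with fiber products over fields. Standard additivity for fiber products yields $\edim R=\edim A+\edim B=2n+2$, hence $\codepth R=2n+1$; $e(R)=e(A)+e(B)=3^n+1$; and $r(R)=r(A)+r(B)=2^n+1$ (the additive type formula applies because $R$ is not Gorenstein, since $A$ is not a discrete valuation ring). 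For the count of two semidualizing modules of $R$ in part~\eqref{thm170508a1}, I would invoke the main theorem of Section~\ref{sec170206a} (or the prior Nasseh--Sather-Wagstaff result) asserting that any local ring with decomposable maximal ideal admitting a dualizing module has exactly two non-isomorphic semidualizing modules.

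For part~\eqref{thm170508a2}, I would choose $\p:=(x_1,y_1,\ldots,x_n,y_n,u)R$, the pullback under $\pi_A:R\twoheadrightarrow A$ of the unique minimal prime of $A$. A short calculation---any $(0,b)\in\ker\pi_A$ is killed by a suitable $(a,0)\in R\setminus\p$ with $a\in\m_A\setminus(x_i,y_i)$---yields $R_\p\cong A_{(x_i,y_i)}\cong T_n\otimes_k k((t))$. Faithfully flat base change along $k\hookrightarrow k((t))$ preserves the classification of semidualizing modules along with type, multiplicity, and embedding codepth, so $R_\p$ has exactly $2^n$ non-isomorphic semidualizing modules and the stated numerical invariants.

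The hard part will be the discrepancy between the two semidualizing counts: the bound of $2$ for $R$ is a rigidity statement leveraging the decomposability of $\m_R$, while the count $2^n$ for $R_\p$ arises because localization collapses the $B$-direction and leaves the tensor-product structure of $T_n$ intact. A secondary technical point will be verifying the additive type formula for fiber products, which fails in the degenerate case of two discrete valuation rings but applies here since $A$ is not a DVR.
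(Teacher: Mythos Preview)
Your construction is identical to the paper's (your $A$, $B$, $T_n$, $T$ are the paper's $S$, $T$, $S_0$, and $k[X_1,X_2]/(X_1,X_2)^2$, respectively), and the argument follows the same route: Nasseh--Sather-Wagstaff \cite[Corollary~4.6]{nasseh:vetfp} for the two semidualizing modules of $R$, the explicit localization $R_\p\cong T_n\otimes_k k(\!(t)\!)$, and the tensor-product count from \cite[Corollary~4.11]{sather:divisor} for the $2^n$ semidualizing modules of $R_\p$. One small correction: the type formula you invoke is not the additive one $r(A)+r(B)$ but the Case~2 formula of Fact~\ref{para170508bz} (singular fibered with regular, depth~1), which gives $r(R)=r(A)+1$; this agrees numerically since $r(B)=1$, but your stated justification (``because $R$ is not Gorenstein'') does not match the actual case analysis.
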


\subsection*{Finite Cohen-Macaulay Type}
The rings of Theorems~\ref{thm171226a} and~\ref{thm170508a}  have decomposable maximal ideals,
which means that they are realized as non-trivial fiber products; see Fact~\ref{disc161010a} below. 
Such rings have very rigid homological properties, as one sees, e.g., 
in~\cite{christensen:gmirlr, kostrikin, lescot:sbpfal, nasseh:vetfp, ogoma:edc}.
For instance, they satisfy (UAC), they have at most the two trivial semidualizing modules, 
they are G-regular, and they satisfy the Auslander-Reiten Conjecture. 

On the other hand, some properties are rarely satisfied by rings with decomposable maximal ideals.
For instance, they are never integral domains (hence, never regular), and they are rarely Gorenstein;
see~\cite[Observation~3.2]{christensen:gmirlr} or Corollary~\ref{cor161010b} below where it is shown that
such a ring is Gorenstein if and only if it is a hypersurface of dimension 1. 
A related characterization of the Cohen-Macaulay property is in Fact~\ref{para170508b}.

This leads to the second point of the present article: to 
similarly characterize when a local ring $R$ with decomposable maximal ideal is Cohen-Macaulay with
\emph{finite Cohen-Macaulay type}, that is, with only finitely many isomorphism classes of indecomposable
maximal Cohen-Macaulay modules.
This is the subject of Section~\ref{sec171118c}, the main result of which is stated next
and proved in Proof~\ref{para180107c} below.
For it, recall that a local ring is \emph{analytically unramified} provided that its completion is reduced.
In particular,  
each analytically unramified local ring is itself reduced.

\begin{intthm} \label{thm171221a}
Assume that the maximal ideal $\m_R$ is decomposable.
Then the following conditions are equivalent.
\begin{enumerate}[\rm(i)]
\item\label{thm171221a1}
The ring $R$ is Cohen-Macaulay and has finite Cohen-Macaulay type. 
\item\label{thm171221a2}
One has $R\cong S\times_k T$ where
$S$ is an analytically unramified hypersurface of Krull dimension 1 and multiplicity at most 2
and $T$ is regular of Krull dimension 1, i.e., a discrete valuation ring.
\item\label{thm171221a3}
One has $R\cong S\times_k T$ where
$S$ and $T$ are  Cohen-Macaulay of Krull dimension~1 with finite Cohen-Macaulay type and $R$ has multiplicity at most 3.
\end{enumerate}
When these conditions are satisfied, one has $\dim(R)=1$.
\end{intthm}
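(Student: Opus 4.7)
The plan is to first establish the dimension statement and the basic multiplicity formula, then prove the cycle (ii) $\Rightarrow$ (iii) $\Rightarrow$ (i) $\Rightarrow$ (ii), with the final implication as the main obstacle.

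By Fact~\ref{disc161010a} the decomposability hypothesis gives a non-trivial fiber-product presentation $R\cong S\times_k T$, with $S\cong R/\m_T$ and $T\cong R/\m_S$. Each of (i), (ii), (iii) entails that $R$ is Cohen-Macaulay, and then Fact~\ref{para170508b} forces $\dim(R)=1$ together with both $S$ and $T$ Cohen-Macaulay of Krull dimension~$1$. This settles the final assertion and places the entire argument in dimension one, where the isomorphisms $\m_R\cong\m_S\oplus\m_T$ and $\m_R^n\cong\m_S^n\oplus\m_T^n$ for $n\geq 1$ yield the multiplicity formula $e(R)=e(S)+e(T)$.

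The direction (ii) $\Rightarrow$ (iii) is immediate: a DVR trivially has finite Cohen-Macaulay type, an analytically unramified one-dimensional Cohen-Macaulay ring of multiplicity at most~$2$ has finite Cohen-Macaulay type by the classical Drozd-Roiter criterion, and the multiplicity formula gives $e(R)\leq 3$. For (iii) $\Rightarrow$ (i), the bound $e(R)\leq 3$ forces one factor, say $T$, to have multiplicity~$1$ and hence be a DVR, while $S$ has multiplicity at most~$2$ and finite Cohen-Macaulay type. I would then classify MCM $R$-modules via the pullback square: each MCM $R$-module $L$ is determined by its induced modules $L\otimes_R S$ and $L\otimes_R T$ together with a $k$-linear gluing between their maximal-ideal quotients. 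Since $T$ is a DVR, $L\otimes_R T$ is free of some rank, and since $S$ admits only finitely many indecomposable MCM modules with uniformly bounded minimal number of generators, only finitely many pullback data arise up to isomorphism.

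The direction (i) $\Rightarrow$ (ii) is the core of the theorem. The first step is to descend finite CM type from $R$ to $S$ and $T$: using $S\cong R/\m_T$ and $T\cong R/\m_S$, any indecomposable MCM $S$-module is indecomposable and MCM over $R$, and similarly for $T$, so finite CM type of $R$ limits the number of indecomposable MCM modules on each side. The second step is to bound multiplicities via standard one-dimensional finite-CM-type theory (Drozd-Roiter in the analytically unramified case, with appropriate extensions otherwise). The decisive and most delicate step is to exclude the simultaneous possibility $e(S),e(T)\geq 2$: fixing non-free indecomposable MCM modules $M$ over $S$ and $N$ over $T$, the quotients $M/\m_S M$ and $N/\m_T N$ are $k$-vector spaces of dimension at least~$2$, and the pullback construction applied to a pencil of $k$-linear isomorphisms between two-dimensional subspaces produces an infinite family of pairwise non-isomorphic indecomposable MCM $R$-modules, contradicting finite CM type. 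Hence one factor, say $T$, is a DVR, whence $e(S)=e(R)-1$, and the same pencil strategy inside $R$ rules out $e(S)\geq 3$, giving $e(S)\leq 2$. Abhyankar's bound $\edim(S)\leq e(S)$ in dimension one then shows $S$ is a hypersurface, and a final application of the pencil argument to $\widehat R \cong \widehat S\times_k\widehat T$ shows that nilpotents in $\widehat S$ would produce an infinite family of indecomposable MCM modules, forcing $S$ analytically unramified.
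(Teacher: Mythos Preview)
Your cycle runs (ii)$\Rightarrow$(iii)$\Rightarrow$(i)$\Rightarrow$(ii), opposite to the paper's (i)$\Rightarrow$(iii)$\Rightarrow$(ii)$\Rightarrow$(i), and both of your hard implications have genuine gaps.

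For (i)$\Rightarrow$(ii): the assertion that a non-free indecomposable MCM module over a one-dimensional Cohen--Macaulay local ring of multiplicity $\geq 2$ must have $\beta_0\geq 2$ is false---over the node $S=k[\![x,y]\!]/(xy)$ every indecomposable MCM module ($S$, $S/(x)$, $S/(y)$) is cyclic. More fatally, once you have reduced $T$ to a DVR, the pencil mechanism collapses: every MCM $T$-module is free, and since $\operatorname{GL}_n(T)\to\operatorname{GL}_n(k)$ is surjective, any two gluing isomorphisms $M/\m_SM\to k^n$ differ by an automorphism of $T^n$ and hence yield isomorphic pullbacks. So pencils give you no way to rule out $e(S)\geq 3$ or nilpotents in $\widehat S$. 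The paper sidesteps this entirely: it applies the Drozd--Roiter multiplicity bound $e\leq 3$ directly to $R$ (Lemma~\ref{lem171229a}) to obtain (i)$\Rightarrow$(iii), and then extracts the hypersurface and analytically-unramified conclusions for $S$ in the step (iii)$\Rightarrow$(ii) from $e(S)\leq 2$ together with the fact that $S$ itself inherits finite CM type.

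For (iii)$\Rightarrow$(i): the Milnor-patching picture is not as clean as you suggest. Maximal Cohen--Macaulay $R$-modules do not correspond to triples $(M,N,\varphi)$ with $M$ and $N$ maximal Cohen--Macaulay over $S$ and $T$; for instance $\m_R$ is MCM over $R$, but $\m_R\otimes_R S\cong\m_S\oplus(\m_T/\m_T^2)$ has a $k$-summand and depth zero over $S$. Worse, if the na\"ive gluing count were correct, your own observation above (all gluings absorbed by $\operatorname{GL}_n(T)$) would show that $S\times_kT$ has finite CM type whenever $S$ does and $T$ is a DVR, with no constraint on $e(S)$---contradicting the theorem itself. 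The paper proves (ii)$\Rightarrow$(i) instead, by verifying the Drozd--Roiter criterion for $R$ directly: Lemmas~\ref{lem171223az} and~\ref{lem171223a} compute $\widetilde R\cong\widetilde S\times\widetilde T$ and $(\m_R\widetilde R+R)/R\cong(\m_S\widetilde S+S)/S\oplus(\m_T\widetilde T+T)/T$, which is cyclic under the hypotheses of~(ii).
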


In addition, Theorem~\ref{thm171229a} characterizes the local rings of depth at most 1 with decomposable maximal ideal
and finite Cohen-Macaulay type.
Lastly, we note that Section~\ref{sec171120a} contains foundational material.

\section{Background on Local Rings}\label{sec171120a}

We begin this section with an observation of 
Ogoma~\cite[Lemma 3.1]{ogoma:edc}.

\begin{fact}\label{disc161010a}
A local ring $R$ is a \emph{fiber product} if it has the form
$$
R\cong S\times_k T=\left\{(s,t)\in S\times T\mid \pi_S(s)=\pi_T(t)\right\}
$$
where 
$(S,\fm_S,k)$ and $(T,\fm_T,k)$
are local rings with a common residue field $k$, and
$\pi_S\colon S\to k$ and $\pi_T\colon T\to k$ are the natural surjections.
Such a fiber product is \emph{non-trivial} if $S\neq k\neq T$.
Note that $S\times_k T$ is automatically a local ring with maximal ideal $\fm_{S\times_k T}=\fm_S\oplus \fm_T$ and residue field $k$. 
In particular, the maximal ideal of $S\times_k T$ is decomposable. 
We identify the maximal ideals
of $S$ and $T$  with the ideals 
$\m_S\oplus 0$ and $0\oplus\m_T$, 
respectively,
of $S\times_k T$.
Under this identification,  there are ring isomorphisms $S\cong (S\times_k T)/\frak m_T$ and 
$T\cong (S\times_k T)/\frak m_S$.

Conversely, assume that $\m_R$ is decomposable as $\m_R= I\oplus J$, and set $S=R/I$ and $T=R/J$.
Then there is an isomorphism
$R\xra\cong S\times_{k}T$ given by
$r\mapsto (r+I,r+J)$.
Under this isomorphism, we have $I\cong \m_T$ and $J\cong \m_S$.
\end{fact}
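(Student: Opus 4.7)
The plan is to verify both directions of this standard equivalence by direct computation, using only the definitions of fiber product and of a decomposable maximal ideal. Both directions are essentially formal; the only substantive input for the converse is that an internal direct sum $\m_R = I \oplus J$ of ideals forces $I \cap J = 0$ (and hence $IJ = 0$).

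For the forward direction, I would view $S \times_k T$ as the subring of $S \times T$ cut out by the equation $\pi_S(s) = \pi_T(t)$. The crucial step is to show that $S \times_k T$ is local with maximal ideal $\m_S \oplus \m_T$: given $(s,t) \in S \times_k T$ with $s \notin \m_S$, the equation $\pi_S(s) = \pi_T(t)$ forces $\pi_T(t) \ne 0$, so $t$ is a unit in $T$, and $(s^{-1}, t^{-1})$ is readily checked to lie in $S \times_k T$. Hence the non-units are precisely $\m_S \oplus \m_T$, with residue field $k$. I would then verify that $\m_S \oplus 0$ and $0 \oplus \m_T$ are genuine ideals of $S \times_k T$ (the absorbing property uses $\pi_S(\m_S) = 0$), yielding the decomposition of the maximal ideal; the identifications $(S \times_k T)/\m_T \cong S$ and $(S \times_k T)/\m_S \cong T$ follow from projection onto the respective factor.

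For the converse direction, assume $\m_R = I \oplus J$ as ideals and set $S = R/I$, $T = R/J$. Each quotient is local with maximal ideal $\m_R/I$ or $\m_R/J$ and common residue field $k = R/\m_R$. I would define $\varphi \colon R \to S \times_k T$ by $r \mapsto (r+I, r+J)$; this lands in the fiber product since $\pi_S(r+I) = r + \m_R = \pi_T(r+J)$. Injectivity is immediate from $\ker \varphi = I \cap J = 0$. For surjectivity, given $(r_1+I, r_2+J) \in S \times_k T$, the compatibility condition gives $r_1 - r_2 \in \m_R = I + J$, so I would write $r_1 - r_2 = i + j$ with $i \in I$, $j \in J$, and take $r = r_1 - i = r_2 + j$, which maps to the prescribed element. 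The final identifications $I \cong \m_T$ and $J \cong \m_S$ under $\varphi$ are then read off from $\varphi(I) = (\m_R/I) \oplus 0 = \m_T$ and symmetrically for $J$.

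No step presents a real obstacle; the only items requiring care are the verification of the local structure on $S \times_k T$ in the forward direction and the explicit production of the preimage $r$ in the converse, both of which are routine once the definitions are unpacked.
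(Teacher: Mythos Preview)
The paper does not supply its own proof of this statement: it is recorded as a Fact with attribution to Ogoma~\cite[Lemma~3.1]{ogoma:edc}, so there is nothing to compare your argument against. Your direct verification is correct in substance and is the standard way to unpack this equivalence.

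One small slip at the very end: for $i\in I$ you have $\varphi(i)=(i+I,\,i+J)=(0,\,i+J)$, so $\varphi(I)=0\oplus(\m_R/J)=0\oplus\m_T$, not $(\m_R/I)\oplus 0$ as you wrote. This is consistent with the paper's claim that $I\cong\m_T$ (and symmetrically $\varphi(J)=\m_S\oplus 0$, giving $J\cong\m_S$); only your coordinate bookkeeping is off.
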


\begin{fact}\label{para170508b}
Assume  
that $\m_R$ is decomposable as $\m_R= I\oplus J$, and set $S=R/I$ and $T=R/J$.
By work of Lescot~\cite{lescot:sbpfal}, we have equalities
\begin{align}
\dim R&=\max\{\dim S,\dim T\}\notag \\
\depth R&=\min\{\depth S,\depth T,1\}.\notag
\end{align}
In particular, $R$ is Cohen-Macaulay if and only if 
$S$ and $T$ are both Cohen-Macaulay and $\dim S=\dim T\leq 1$.
See also Christensen, Striuli, and Veliche~\cite[Remark 3.1]{christensen:gmirlr}.
\end{fact}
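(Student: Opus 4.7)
The plan is to derive all three assertions from the short exact sequence of $R$-modules
$$0\to R\xra{\iota} S\times T\xra{\delta} k\to 0,$$
where $S\times T$ denotes the ordinary product ring, $\iota(r)=(r+I,r+J)$, and $\delta(s,t)=\pi_S(s)-\pi_T(t)$. Injectivity of $\iota$ follows from $I\cap J=0$ (a consequence of $\m_R=I\oplus J$), while exactness elsewhere is immediate. Since $S\times T$ is generated as an $R$-module by $1_{S\times T}$ together with a single lift of $1\in k$, this inclusion is a module-finite extension.

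The dimension formula is then immediate: module-finiteness gives $\dim R=\dim(S\times T)$, and $\dim(S\times T)=\max\{\dim S,\dim T\}$ by the standard computation (using $\m_R\cdot S=\m_S$ and $\m_R\cdot T=\m_T$, so that $\m_R$-adic local cohomology of $S$ coincides with $\m_S$-adic local cohomology, and similarly for $T$). For the depth formula, apply $H^i_{\m_R}(-)$ to the exact sequence. Since $H^i_{\m_R}(k)=0$ for $i\geq 1$, the long exact sequence yields $H^i_{\m_R}(R)\cong H^i_{\m_S}(S)\oplus H^i_{\m_T}(T)$ for $i\geq 2$ and the six-term sequence
$$0\to H^0_{\m_R}(R)\to H^0_{\m_S}(S)\oplus H^0_{\m_T}(T)\xra{\overline\delta} k\to H^1_{\m_R}(R)\to H^1_{\m_S}(S)\oplus H^1_{\m_T}(T)\to 0$$
in low degrees. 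A short case analysis then pins down $\depth R$: if $\min\{\depth S,\depth T\}=0$, say $H^0_{\m_S}(S)\neq 0$, then one checks its intersection with $\ker(\overline\delta)$ is nonzero (this uses the nontriviality $I,J\neq 0$, which forces $S,T\neq k$ and hence gives elements of $H^0_{\m_S}(S)$ lying in $\m_S$), so $H^0_{\m_R}(R)\neq 0$ and $\depth R=0$; otherwise $H^0_{\m_S}(S)=H^0_{\m_T}(T)=0$, so $\overline\delta=0$ and the connecting map $k\hookrightarrow H^1_{\m_R}(R)$ is injective, forcing $H^1_{\m_R}(R)\neq 0$ and hence $\depth R\leq 1$, matching the lower bound $\depth R\geq\min\{\depth S,\depth T,1\}=1$ from the depth lemma applied to the sequence. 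The main technical subtlety is this upper bound $\depth R\leq 1$ under nontrivial decomposability; the six-term sequence handles it uniformly.

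The Cohen-Macaulay characterization then falls out by combining the two formulas with a case split on $\dim R$. If $\max\{\dim S,\dim T\}\geq 2$ then $\depth R\leq 1<\dim R$, so $R$ is not Cohen-Macaulay. If $\dim R=1$, then Cohen-Macaulayness of $R$ forces $\min\{\depth S,\depth T\}\geq 1$, which combined with $\dim S,\dim T\leq 1$ makes both $S$ and $T$ Cohen-Macaulay of dimension $1$. If $\dim R=0$, then $\dim S=\dim T=0$ and both $S,T$ are artinian, hence automatically Cohen-Macaulay of dimension $0$, and so is $R$.
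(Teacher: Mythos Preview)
Your argument is correct and self-contained. The paper itself does not prove this statement: it is recorded as a Fact and attributed to Lescot~\cite{lescot:sbpfal} (see also~\cite[Remark~3.1]{christensen:gmirlr}), with no further justification given. So there is no ``paper's proof'' to compare against beyond the citation.

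Your approach via the short exact sequence $0\to R\to S\times T\to k\to 0$ and the long exact sequence in local cohomology is a clean way to recover the result directly. A few small points worth recording for completeness: the nontriviality hypothesis $I,J\neq 0$ is indeed implicit in the paper's use of ``decomposable'' (cf.\ Fact~\ref{disc161010a}), and you use it correctly to ensure $H^0_{\m_S}(S)\cap\m_S\neq 0$ when $\depth S=0$ (since $H^0_{\m_S}(S)$ is a nonzero ideal, it either lies in $\m_S$ or equals $S$, and in the latter case $\m_S\neq 0$ still gives the needed element). In the Cohen--Macaulay characterization your case split handles the forward direction fully; the converse in the $\dim R=1$ case is not spelled out, but it is immediate from the two formulas ($\dim S=\dim T=1$ and both Cohen--Macaulay give $\depth R=\min\{1,1,1\}=1=\dim R$), so this is at most a one-line omission.
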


\begin{defn}
Recall that the \emph{Poincar\'e series} and the \emph{Bass series} of 
a finitely generated $R$-module $M$
are 
the formal power series
\begin{align*}
P_M^R(t)&:=\sum_{i\geq 0}\rank_{k}(\Ext_R^i(M,k))t^i\\
I^M_R(t)&:=\sum_{i\geq 0}\rank_{k}(\Ext_R^i(k,M))t^i
\end{align*}
respectively.
The constant term of $P^R_M(t)$ is the minimal number of generators of $M$,
denoted $\beta^R_0(M)=\rank_{k}(\Hom_R(M,k))$.
The order of the Bass series $I_R^R(t)$ is $\depth(R)$, and the 
coefficient of $t^{\depth R}$ in $I_R^R(t)$ is the \emph{type of $R$}, denoted $r(R)$. 
\end{defn}

\begin{defn}\label{defn171118a}
Let $\comp R$ denote the $\m_R$-adic completion of $R$.
Then $R$ is a \emph{hypersurface} if there is a presentation $\comp R\cong Q/\fa$  where $Q$ is a regular local ring
and $\fa$ is principal.
Equivalently, if the \emph{embedding codepth} of $R$ is $\ecodepth(R)=\beta^R_0(\m_R)-\depth(R)$,
then $R$ is a hypersurface if and only if $\ecodepth(R)\leq 1$.
(In particular, we take the perspective that a regular local ring is a hypersurface.)
The \emph{embedding dimension} of $R$ is $\edim(R)=\beta^R_0(\m_R)$.
\end{defn}

\begin{disc}\label{disc171118c}
An alternate definition of the term ``hypersurface''  asks $R$ itself 
to be of the form $Q/\fa$ where $Q$ is regular and $\fa$ is principal. 
Heitmann and Jorgensen~\cite{MR2975397} show that our definition is less restrictive than this one.
\end{disc}

Next, we document the
Bass series and type of a non-trivial fiber product.

\begin{fact}\label{para170508bz}
Assume  that $\m_R$ is decomposable as $\m_R= I\oplus J$, and set $S=R/I$ and $T=R/J$.
Using results  of Lescot~\cite[Theorem 3.1]{lescot:sbpfal} and Kostrikin and {\v{S}}afarevi{\v{c}}~\cite{kostrikin},
the Bass series and type of $R$ are as follows.

Case 1: $S$ and $T$ are both singular, i.e., non-regular.
\begin{align*}
I_R^R(t)
&=\frac{tP^S_{k}(t)P^T_{k}(t)+I_S^S(t)P^T_{k}(t)+I_T^T(t)P^S_{k}(t)}{P^T_{k}(t)+P^S_{k}(t)-P^S_{k}(t)P^T_{k}(t)}
\\
r(R)
&=
\begin{cases}
r(S)+r(T)&\text{if $\depth S=0=\depth T$}
\\
r(S)&\text{if $\depth S=0<\depth T$}
\\
r(T)&\text{if $\depth T=0<\depth S$}
\\
r(S)+r(T)+1&\text{if $\depth S=1=\depth T$}
\\
r(S)+1&\text{if $\depth S=1<\depth T$}
\\
r(T)+1&\text{if $\depth T=1<\depth S$}
\\
1&\text{if $1<\depth S,\depth T$.}
\end{cases}
\end{align*}
In particular, recalling the characterization of the Cohen-Macaulay property for $R$ from Fact~\ref{para170508bz},
we conclude
in this case that $R$ is not Gorenstein.
(This follows from a case-by-case analysis of the options for $r(R)$.
For instance, if $\depth S=0=\depth T$, then $R$ is Cohen-Macaulay but has 
$r(R)
=r(S)+r(T)\geq 2$.
In the next case $\depth S=0<\depth T$, then $R$ is not Cohen-Macaulay.
And so on.)

Case 2: $S$ is 
singular and $T$ is regular of dimension $n\geq 1$.
(We require $n\geq 1$ in this case, otherwise $T=k$, contradicting the non-triviality of our fiber product.)
\begin{align*}
I_R^R(t)
&=\frac{tP^S_{k}(t)(1+t)^n+I_S^S(t)(1+t)^n-t^{n+1}P^S_{k}(t)}{P^T_{k}(t)+P^S_{k}(t)-P^S_{k}(t)P^T_{k}(t)}
\\
r(R)
&=\begin{cases}
r(S)&\text{if $\depth S=0$}
\\
r(S)+1&\text{if $\depth S=1$}
\\
1&\text{if $1<\depth S$.}
\end{cases}
\end{align*}
Also as in the previous case, we conclude that $R$ is never Gorenstein in this case.

Case 3: $S$ and $T$ are regular of dimensions $m, n\geq 1$ respectively.
\begin{align*}
I_R^R(t)
&=\frac{t(1+t)^{m+n}-t^{m+1}(1+t)^{n}-t^{n+1}(1+t)^{m}}{P^T_{k}(t)+P^S_{k}(t)-P^S_{k}(t)P^T_{k}(t)}
\\
r(R)
&=1
\end{align*}
Note  in this 
case
that $R$ is Gorenstein if and only if it is Cohen-Macaulay, i.e.,
if and only if $m=n=1$.
See also Christensen, Striuli, and Veliche~\cite[Remark 3.1]{christensen:gmirlr}.
\end{fact}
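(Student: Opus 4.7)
The plan is to derive the displayed Bass-series formula from the general fiber-product formula of Lescot~\cite[Theorem 3.1]{lescot:sbpfal}, which expresses $I_R^R(t)$ as a rational function of $P^S_k(t),P^T_k(t),I_S^S(t),I_T^T(t)$. The uniform denominator $P^T_k(t)+P^S_k(t)-P^S_k(t)P^T_k(t)$ in all three cases arises from the Kostrikin--{\v{S}}afarevi{\v{c}} formula~\cite{kostrikin}
\[
P^R_k(t)=\frac{P^S_k(t)P^T_k(t)}{P^T_k(t)+P^S_k(t)-P^S_k(t)P^T_k(t)}.
\]
Case~1 is Lescot's formula applied directly. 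For Cases~2 and~3, one substitutes the data $P^U_k(t)=(1+t)^n$ and $I^U_U(t)=t^n$ for $U$ regular of dimension $n$; the negative correction terms $-t^{n+1}P^S_k(t)$ in Case~2 and $-t^{m+1}(1+t)^n-t^{n+1}(1+t)^m$ in Case~3 come from Lescot's separate analysis in the regular setting, where certain Ext groups entering the construction vanish because the Koszul complex resolves $k$ finitely.

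For the type, use Fact~\ref{para170508b} to obtain $\depth R=\min\{\depth S,\depth T,1\}\in\{0,1\}$. Since $P^S_k(0)=P^T_k(0)=1$, the denominator has constant term $1$, so the leading $t$-order of $I_R^R(t)$ agrees with that of its numerator; when $\depth R=1$ the coefficient of $t$ in the quotient equals the coefficient of $t$ in the numerator. In Case~1, evaluating the numerator at $t=0$ gives $I_S^S(0)+I_T^T(0)$, which recovers $r(R)$ in the three subcases where some depth is $0$ (each $I^U_U(0)$ surviving only when $\depth U=0$). When $\depth S=\depth T=1$, the coefficient of $t$ in the numerator is $1+r(S)+r(T)$: one from $tP^S_kP^T_k$, plus $r(S)$ from $I_S^S\cdot P^T_k$, plus $r(T)$ from the symmetric term. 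The remaining subcases of Case~1 and the subcases of Cases~2--3 are handled by the same leading-order expansion, using that $I^T_T(t)=t^n$ in Case~2 contributes nothing to the coefficient of $t^0$ or $t^1$ whenever $n\geq 2$.

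The Gorenstein assertions in Cases~1 and~2 follow by cross-referencing the Cohen--Macaulay characterization of Fact~\ref{para170508b}: every subcase producing a Cohen--Macaulay $R$ also yields $r(R)\geq 2$, so $R$ is never Gorenstein. In Case~3 one has $r(R)=1$ identically, whence Gorensteinness coincides with Cohen--Macaulayness, which by Fact~\ref{para170508b} forces $\max\{m,n\}=\depth R=1$, i.e.\ $m=n=1$. The main obstacle will be the bookkeeping in Cases~2 and~3: the numerators are not literal substitutions of $(1+t)^n$ and $t^n$ into the Case~1 numerator, because Lescot's construction in the regular setting produces the distinct correction terms reflecting the finiteness of the Koszul resolution of $k$. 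Once these specialized formulas are in hand, extracting $r(R)$ and the Gorenstein conclusions reduce to routine power-series expansion.
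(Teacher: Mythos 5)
Your proposal is correct and matches the paper's treatment: the paper states this as a Fact, deferring the Bass-series formulas to Lescot and the denominator to Kostrikin--{\v{S}}afarevi{\v{c}} exactly as you do, and obtains the type and the Gorenstein conclusions by the same power-series expansion and case-by-case analysis (which the paper only sketches parenthetically). The one trivial slip is your remark that $I_T^T(t)=t^n$ contributes nothing to the $t^0$ or $t^1$ coefficient ``whenever $n\geq 2$'': the relevant numerator term in Case~2 is $-t^{n+1}P^S_k(t)$, which already misses those coefficients for every $n\geq 1$, so the computation goes through for $n=1$ as well.
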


The next result summarizes some of the points of Fact~\ref{para170508bz}.
It is almost certainly well-known to the experts, in light of~\cite{christensen:gmirlr,lescot:sbpfal},

\begin{cor}\label{cor161010b}
The following conditions on the local ring $R$ are equivalent.
\begin{enumerate}[\rm(i)]
\item\label{cor161010b2}
The maximal ideal $\m_R$ is decomposable, and $R$ is Gorenstein.
\item\label{cor161010b3}
The maximal ideal $\m_R$ is decomposable, and $R$ is a dimension-1 hypersurface.
\item\label{cor161010b4}
There is an isomorphism $R\cong S\times_k T$ where $(S,\m_S,k)$ and $(T,\m_T,k)$ are regular of dimension 1, i.e., 
where $S$ and $T$
are discrete valuation rings.
\end{enumerate}
\end{cor}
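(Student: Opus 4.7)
The plan is to establish the cycle $\text{(iii)}\Rightarrow\text{(ii)}\Rightarrow\text{(i)}\Rightarrow\text{(iii)}$, with essentially all the nontrivial content drawn from Facts~\ref{disc161010a}, \ref{para170508b}, and~\ref{para170508bz}.

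For $\text{(iii)}\Rightarrow\text{(ii)}$, suppose $R\cong S\times_k T$ with $S$ and $T$ discrete valuation rings. Fact~\ref{disc161010a} gives that $\m_R=\m_S\oplus\m_T$ is decomposable, Fact~\ref{para170508b} gives $\dim R=1$ and that $R$ is Cohen-Macaulay, and Case~3 of Fact~\ref{para170508bz} with $m=n=1$ gives $r(R)=1$, so $R$ is Gorenstein. To verify that $R$ is a hypersurface, I would compute the embedding codepth directly: since $\m_S\cdot\m_T=0$ inside $R$, one has $\m_R^2=\m_S^2\oplus\m_T^2$, hence $\edim(R)=\edim(S)+\edim(T)=2$; together with $\depth(R)=1$ this gives $\ecodepth(R)=1$, so $R$ is a dimension-$1$ hypersurface by Definition~\ref{defn171118a}.

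The implication $\text{(ii)}\Rightarrow\text{(i)}$ is immediate, since any hypersurface is Gorenstein.

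For $\text{(i)}\Rightarrow\text{(iii)}$, Fact~\ref{disc161010a} provides a non-trivial isomorphism $R\cong S\times_k T$ with $S=R/I$ and $T=R/J$. I would then run through the three cases of Fact~\ref{para170508bz}. Cases~1 and~2 each include the explicit observation that $R$ is never Gorenstein: indeed, the case-by-case formulas for $r(R)$ show that in every subcase either $R$ fails to be Cohen-Macaulay or $r(R)\geq 2$. So the non-trivial isomorphism must fall under Case~3, where both $S$ and $T$ are regular of dimensions $m,n\geq 1$. Since $R$ is Gorenstein it is in particular Cohen-Macaulay, so Fact~\ref{para170508b} forces $\dim S=\dim T\leq 1$; combined with $m,n\geq 1$ this yields $m=n=1$, i.e., $S$ and $T$ are discrete valuation rings.

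The only real obstacle is the case analysis in $\text{(i)}\Rightarrow\text{(iii)}$, and this is already packaged inside Fact~\ref{para170508bz}; beyond that, the single small computation required is the embedding codepth calculation used for the hypersurface conclusion in $\text{(iii)}\Rightarrow\text{(ii)}$.
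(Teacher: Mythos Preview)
Your proof is correct and follows the same logical skeleton as the paper's proof: $\text{(iii)}\Rightarrow\text{(ii)}$, $\text{(ii)}\Rightarrow\text{(i)}$, and $\text{(i)}\Rightarrow\text{(iii)}$ via the case analysis packaged in Fact~\ref{para170508bz}. The only difference is that for $\text{(iii)}\Rightarrow\text{(ii)}$ the paper cites~\cite[Observation~3.2]{christensen:gmirlr}, whereas you supply the elementary embedding-codepth computation directly; this makes your version self-contained but is not a genuinely different argument.
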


\begin{proof}
For $\eqref{cor161010b4}\implies\eqref{cor161010b3}$, see~\cite[Observation~3.2]{christensen:gmirlr}.
The implication $\eqref{cor161010b3}\implies\eqref{cor161010b2}$ is old news, and
$\eqref{cor161010b2}\implies\eqref{cor161010b4}$
is from Fact~\ref{para170508bz}.
\end{proof}

\begin{defn}
The \emph{(Hilbert-Samuel) multiplicity} of the local ring $R$ is the positive integer
$e(R)=\lim_{n\to\infty}
\len(R/\m_R^{n+1})/n^d$.
In other words, $e(R)$ is the 
normalized
leading coefficient of the Hilbert polynomial representing the Hilbert function
$\len(R/\m_R^{n+1})$ for $n\gg 0$.
Recall the formula
\begin{equation}\label{eq180119a}
e(R)=\sum_{\substack{\dim(R/\p)\\=\dim(R)}}\len(R_{\p})e(R/\p)
\end{equation}
where the sum is taken over all primes $\p\in\spec(R)$ with $\dim(R/\p)=\dim(R)$;
see, e.g., \cite[A.26~Proposition]{MR2919145}.
\end{defn}

\begin{fact}\label{fact171221a}
Let $R\cong S\times_k T$ be a non-trivial fiber product.
The condition $\m_S\m_T=0$ 
implies
that every prime ideal of $R$
contains $\m_S$ or $\m_T$. From the isomorphisms $R/\m_S\cong T$ and $R/\m_T\cong S$, we
then conclude that 
$$\spec(R)=\{\m_S\oplus\q\mid\q\in\spec(T)\}\cup \{\p\oplus\m_T\mid\p\in\spec(S)\}.$$
Following this notation, we have
$\Ht(\m_S\oplus\q)=\Ht(\q)$ and 
$R/(\m_S\oplus\q)\cong T/\q$, so
$\dim(R/(\m_S\oplus\q))=\dim(T/\q)$, and similarly for the prime $\p\oplus\m_T$.
Furthermore, it is straightforward to show that if $\q\neq\m_T$, then
$R_{\m_S\oplus\q}\cong T_\q$, and similarly for $\p\oplus\m_T$.
Thus, we use the  
formula~\eqref{eq180119a}
to deduce the first three cases of the following formula:
\begin{align*}
e(R)
&=\begin{cases}
e(S)&\text{if $\dim(S)>\dim(T)$}\\
e(T)&\text{if $\dim(S)<\dim(T)$}\\
e(S)+e(T)&\text{if $\dim(S)=\dim(T)\neq 0$}\\
e(S)+e(T)-1&\text{if $\dim(S)=\dim(T)= 0$.}
\end{cases}
\end{align*}
For the fourth case, assume that $\dim(S)=\dim(T)= 0$. 
The isomorphisms $R/\m_T\cong S$ and $T/\m_T\cong k$ explain the second and third equalities in the next sequence.
\begin{align*}
e(R)&=\len(R)=\len(S)+\len(\m_T)\\
&=\len(S)+\len(T)-1=e(S)+e(T)-1
\end{align*}
The other equalities come from the dimension assumption for $S$ and $T$.
\end{fact}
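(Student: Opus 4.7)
The plan is to compute $e(R)$ by first analyzing $\spec(R)$ carefully and then invoking the associativity formula~\eqref{eq180119a}. First I would observe that under the identifications $\m_S=\m_S\oplus 0$ and $\m_T=0\oplus\m_T$ inside $R$, one has $\m_S\m_T=0$ since $(s,0)(0,t)=(0,0)$. It follows that every prime of $R$ contains $\m_S$ or $\m_T$; combined with the quotient isomorphisms $R/\m_S\cong T$ and $R/\m_T\cong S$ from Fact~\ref{disc161010a}, this yields
$$\spec(R)=\{\m_S\oplus\q\mid\q\in\spec(T)\}\cup\{\p\oplus\m_T\mid\p\in\spec(S)\},$$
with the two families meeting only at $\m_R$. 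Heights and quotient dimensions of these primes then transfer directly from $\spec(S)$ and $\spec(T)$.

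The next step is to establish the localization identity $R_{\m_S\oplus\q}\cong T_\q$ for $\q\neq\m_T$. Choose $t\in\m_T\ssm\q$; then $(0,t)$ lies outside $\m_S\oplus\q$ yet annihilates every $(s,0)\in\m_S\oplus 0$, so inverting $(0,t)$ already forces $\m_S$ to vanish in the localization. Thus $R_{\m_S\oplus\q}\cong(R/\m_S)_\q\cong T_\q$, and the symmetric statement holds for $\p\oplus\m_T$ with $\p\neq\m_S$.

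With the spectrum description and this localization identity in hand, the first three cases of the multiplicity formula fall out of~\eqref{eq180119a} after using Fact~\ref{para170508b} to identify $\dim(R)=\max\{\dim(S),\dim(T)\}$. When $\dim(S)>\dim(T)$, only primes of the form $\p\oplus\m_T$ with $\dim(S/\p)=\dim(S)$ achieve the maximal coheight in $R$ (any such $\p$ is automatically different from $\m_S$ since $\dim(S)\geq 1$), and the sum reduces to $\sum\len(S_\p)e(S/\p)=e(S)$. The case $\dim(T)>\dim(S)$ is symmetric, and when $\dim(S)=\dim(T)\neq 0$ both families contribute, yielding $e(S)+e(T)$.

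The fourth case $\dim(S)=\dim(T)=0$ requires a separate argument, since the maximal ideal $\m_R$ is now the only relevant prime and the associativity formula collapses to $e(R)=\len(R)$. For this I would use the short exact sequence of $R$-modules
$$0\to 0\oplus\m_T\to R\to R/(0\oplus\m_T)\to 0,$$
together with $R/(0\oplus\m_T)\cong S$ and the observation that $0\oplus\m_T$ is annihilated by $\m_S$, so its $R$-length equals its $T$-length, namely $\len(T)-1$. Additivity then gives $\len(R)=\len(S)+\len(T)-1$, i.e.\ $e(R)=e(S)+e(T)-1$. The main technical subtlety in the whole plan is the localization identity in the second step: it hinges on the vanishing $\m_S\m_T=0$ being strong enough to kill one of the two factors once any element of the other's maximal ideal is inverted.
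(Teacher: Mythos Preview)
Your proposal is correct and follows essentially the same approach as the paper: both arguments describe $\spec(R)$ via $\m_S\m_T=0$, establish the localization identity $R_{\m_S\oplus\q}\cong T_\q$ for $\q\neq\m_T$, apply the associativity formula~\eqref{eq180119a} for the first three cases, and compute $\len(R)=\len(S)+\len(\m_T)=\len(S)+\len(T)-1$ for the artinian case. Your added detail on the localization step (inverting $(0,t)$ with $t\in\m_T\ssm\q$ to kill $\m_S$) simply unpacks what the paper calls ``straightforward.''
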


\begin{fact}\label{lem171124a}
Let $\ulx,\uly$ be finite lists of power series variables over the field $k$, and let $\pyy$ denote the field of fractions of $\psy$.
Let $I\subseteq (\ulx)\psyx$ be an ideal of $\psyx$ such that
each $X_i$ has a positive power in $I$, that is, such that the quotient $\psyx_{(\ulx)}/(I)$ has finite, positive length. 
Then it is straightforward to show that there is an isomorphism 
$$\frac{\psyx_{(\ulx)}}{(I)}\cong\frac{\psx[\pyy]}{(I)}.$$
\end{fact}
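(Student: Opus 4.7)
Plan: The proof will turn on two core observations. First, the hypothesis that $\psyx_{(\ulx)}/(I)$ has finite, positive length (equivalently, each $X_i$ has a positive power in $I$) implies that $(\ulx)^N \subseteq I$ for some positive integer $N$: if $X_i^{n_i} \in I$ for each $i$, then any monomial in $\ulx$ of total degree at least $\sum_i n_i$ is divisible by some $X_i^{n_i}$. This in turn forces the images of the variables $X_i$ to be nilpotent in every quotient by $(I)$, so all formal power series in $\ulx$ collapse to polynomials once one works modulo $(I)$.

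The strategy I would follow is to identify both sides of the proposed isomorphism with the common ring $\pyy \otimes_{\psy} (\psyx/(I))$. For the left side, I would argue that localizing $\psyx/(I)$ at the image of the multiplicative set $\psyx \setminus (\ulx)$ coincides with localizing at $\psy \setminus \{0\}$: any $r \in \psyx \setminus (\ulx)$ decomposes as $r = s + u$ with $s \in \psy \setminus \{0\}$ and $u \in (\ulx)\psyx$, and the nilpotency of the image of $u$ modulo $(I)$ makes $1 + s^{-1}u$ (a finite geometric series) a unit. Hence $\psyx_{(\ulx)}/(I) \cong \pyy \otimes_{\psy}(\psyx/(I))$.

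For the right side, I would interpret $\psx[\pyy]$ as the compositum of $\psx$ and $\pyy$ inside a suitable ambient ring (for example $\pyy[[\ulx]]$, or equivalently the localization $(\psy \setminus \{0\})^{-1}\psyx$); the same truncation $(\ulx)^N \subseteq I$ collapses power series in $\ulx$ to polynomials modulo $(I)$, and an analogous identification yields $\psx[\pyy]/(I) \cong \pyy \otimes_{\psy}(\psyx/(I))$. Splicing the two isomorphisms produces the desired result.

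The main obstacle will be essentially notational: pinning down precisely the meaning of $\psx[\pyy]$ and verifying that the ideal $I$ transports compatibly across the rings involved. Once the nilpotency of the $X_i$ modulo $(I)$ is invoked to truncate all power series to polynomials and the two localizations are seen to agree in the quotient, the remaining verifications are formal.
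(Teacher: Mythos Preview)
Your proposal is correct. The paper does not actually supply a proof of this fact: it is stated as a Fact with the phrase ``it is straightforward to show,'' and no argument is given. Your approach---using the hypothesis to force $(\ulx)^N\subseteq I$, whence the $X_i$ are nilpotent modulo $I$, and then identifying both quotients with $\pyy\otimes_{\psy}(\psyx/I)$ via matching the two localizations---is exactly the kind of routine verification the authors had in mind, and it goes through without difficulty.
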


\section{Auslander Conditions and Localizations}
\label{sec171226a}

With the preliminaries out of the way, we dive into the proof of our first main theorem from the introduction.

\begin{Proof}[\protect{Proof of Theorem~\ref{thm171226a}}]\label{para180107a}
A result of Jorgensen and \c Sega~\cite[Theorem~(1)]{jorgensen:nccgr}
provides a  finite dimensional Gorenstein $k$-algebra $A$ that does not satisfy (AC). 
Moreover, the ring $A$ is constructed as follows.
Fix an element $\alpha\in k$ that has infinite order in the multiplicative group $k^\times$.
In the polynomial ring $k[X_1,\ldots,X_5]$, consider the ideal $\fa$ generated by the following quadratic forms:
\begin{gather*}
\alpha X_1X_3+X_2X_3,
\
X_1X_4+X_2X_4,
\
X_3^2+\alpha X_1X_5-X_2X_5,
\\
X_4^2+X_1X_5-X_2X_5,
\
X_1^2,
\
X_2^2,
\
X_3X_4,
\
X_3X_5,
\
X_4X_5,
\
X_5^2.
\end{gather*}
Let $\fb=\fa k[\![X_1,\ldots,X_5]\!]$.
Then $A=k[X_1,\ldots,X_5]/\fa\cong k[\![X_1,\ldots,X_5]\!]/\fb$ is a  finite dimensional Gorenstein $k$-algebra
of length 12 
and embedding dimension 5
that does not satisfy (AC). 
(The isomorphism here is due to the fact that $\fa$ contains a power of each 
variable
$X_i$.)
In particular, if $\alpha$ is fixed, and $Y$ is a power series variable,
let
$k(\!(Y)\!)$ denote the field of fractions of the power series ring $k[\![Y]\!]$ as in Fact~\ref{lem171124a}
and set $\fc=\fa k(\!(Y)\!)[\![X_1,\ldots,X_5]\!]$;
then the ring $k(\!(Y)\!)[\![X_1,\ldots,X_5]\!]/\fc$
is a  finite dimensional Gorenstein $k(\!(Y)\!)$-algebra of length 12 
and embedding dimension~5
that does not satisfy~(AC).

Now, set $\fa'=\fa k[\![Y,X_1,\ldots,X_5]\!]$ and
$S=k[\![Y,X_1,\ldots,X_5]\!]/\fa'\cong A[\![Y]\!]$.
Then $S$ is local and Gorenstein of Krull dimension 1 and multiplicity 12.
Let $Z$ be another power series variable,
and consider the fiber product
\begin{equation}\label{eq171226a}
R=S\times_kk[\![Z]\!]\cong \frac{k[\![Y,X_1,\ldots,X_5,Z]\!]}{(\fa,YZ,X_1Z,\ldots,X_5Z)}
\end{equation}
The isomorphism comes from the fact that the quotient ring in this display has decomposable maximal ideal
$(Z)\oplus(X_1,\ldots,X_5,Y)$, by construction; see Fact~\ref{disc161010a}.
Then $R$ is a complete local $k$-algebra.
(One can also deduce the 
completeness
of $R$ from~\cite[Lemme(19.3.2.1(iii))]{grothendieck:ega4-4}.)
Also, $R$ is the completion of a local ring that is essentially of finite type over $k$, 
because of~\eqref{eq171226a} and the specific list of generators for $\fa$.
From~\cite[Corollaries~6.3 and~6.6]{nasseh:lrqdmi}, we know that $R$ satisfies (UAC) 
with $b=\depth(R)$.
Furthermore, $R$ is
Cohen-Macaulay of Krull dimension 1, type 2, and multiplicity 13,
by Facts~\ref{para170508b}, \ref{para170508bz}, and~\ref{fact171221a}.
By inspecting~\eqref{eq171226a}, one concludes that $\edim(R)=7$.

Set 
$P=(X_1,\ldots,X_5,Z)k[\![Y,X_1,\ldots,X_5,Z]\!]$ and
exploit the isomorphism~\eqref{eq171226a} to set
$\p=PR$. 
One checks readily that $\p$ is prime.
We localize at $\p$ in the next display, using  the condition $Y\notin P$ for the second isomorphism.
\begin{align*}
R_{\p}
&\cong\frac{k[\![Y,X_1,\ldots,X_5,Z]\!]_{P}}{(\fa,YZ,X_1Z,\ldots,X_5Z)}
\cong\frac{k[\![Y,X_1,\ldots,X_5]\!]_{(X_1,\ldots,X_5)}}{(\fa)}
\cong\frac{k(\!(Y)\!)[\![X_1,\ldots,X_5]\!]}{\fc}
\end{align*}
The third isomorphism is from Fact~\ref{lem171124a}.
As we noted in the first paragraph of this proof, this ring is Gorenstein of multiplicity 12 
and embedding dimension 5
and
does not satisfy (AC), so $R$ 
and $R_\p$ have
the desired properties.\qed
\end{Proof}

\begin{disc}
Of course, we would love to have an example like the above one with type 1, 
since that would answer the 
question~\cite[Section~6, \#3]{huneke:svegr}
of whether the AB property 
localizes.
(Recall that $R$ is an \emph{AB ring} if it is Gorenstein and satisfies (UAC).)
However, since Gorenstein rings with decomposable maximal ideals are hypersurfaces by Corollary~\ref{cor161010b},
and hypersurfaces are AB rings by~\cite[Theorem~4.7]{avramov:svcci}, our methods here will not yield such an example.
\end{disc}

Proof~\ref{para180107a} above uses the ring from~\cite[Theorem~(1)]{jorgensen:nccgr}
that is denoted $A$ in that paper. 
Using the ring $B$ from~\cite[Corollary~3.3(2)]{jorgensen:nccgr} in the same way, 
we obtain the following result.

\begin{thm}\label{thm180128a}
Let $k$ be any field which is not algebraic over a finite field. 
There exists a complete local Cohen-Macaulay $k$-algebra $R$ of Krull dimension 1
with decomposable maximal ideal
such that 
\begin{enumerate}[\rm(a)]
\item
$R$ has type~4, 
multiplicity 9, and embedding codepth 5
and satisfies (UAC),
\item
$R$ is the completion of a local ring that is essentially of finite type over $k$, 
and
\item 
there is a prime ideal $\p\in\spec(R)$ such that the localization
$R_{\p}$ has type 3, multiplicity 8, 
and embedding codepth 4
and does not even satisfy (AC).
\end{enumerate}
\end{thm}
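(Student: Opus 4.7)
The plan is to follow Proof~\ref{para180107a} essentially verbatim, substituting the non-Gorenstein ring $B$ from \cite[Corollary~3.3(2)]{jorgensen:nccgr} for the Gorenstein ring $A$ used there. Working backward from the desired invariants of $R_\p$, the ring $B$ must be a finite-dimensional Cohen-Macaulay local $k$-algebra of length $8$, embedding dimension $4$, and type~$3$ that fails (AC); I would first extract exactly this information from \cite{jorgensen:nccgr}. As in the earlier proof, the defining ideal of $B$ can be arranged to contain a power of each polynomial variable, so that $B \cong k[\![X_1,\ldots,X_4]\!]/\fb$, and base-changing to $k(\!(Y)\!)$ for an auxiliary power series variable $Y$ yields $k(\!(Y)\!)[\![X_1,\ldots,X_4]\!]/\fc$ with the same invariants and the same failure of~(AC).

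Next, I would set $S = k[\![Y,X_1,\ldots,X_4]\!]/\fa' \cong B[\![Y]\!]$, which is complete local Cohen-Macaulay of Krull dimension~$1$, multiplicity~$8$, and type $r(S) = r(B) = 3$ (since $Y$ is $S$-regular and $S/(Y)\cong B$ has depth $0$ with $3$-dimensional socle). Introducing a further power series variable $Z$, I would form the fiber product
\[
R = S \times_k k[\![Z]\!] \cong \frac{k[\![Y,X_1,\ldots,X_4,Z]\!]}{(\fa,YZ,X_1Z,\ldots,X_4Z)},
\]
with the isomorphism coming from Fact~\ref{disc161010a} applied to the decomposable maximal ideal $(Z) \oplus (Y,X_1,\ldots,X_4)$. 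This presentation manifestly exhibits $R$ as complete and as the completion of a local ring essentially of finite type over $k$. Then Facts~\ref{para170508b}, \ref{para170508bz}, and~\ref{fact171221a} combine to give that $R$ is Cohen-Macaulay of dimension~$1$ with multiplicity $e(S) + e(k[\![Z]\!]) = 9$ and type $r(S) + 1 = 4$, while inspection of the presentation yields $\edim(R) = 6$ and hence embedding codepth~$5$. The (UAC) condition for $R$ follows from \cite[Corollaries~6.3 and~6.6]{nasseh:lrqdmi}.

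For the localization, I would take $\p$ to be the image in $R$ of $P = (X_1,\ldots,X_4,Z)k[\![Y,X_1,\ldots,X_4,Z]\!]$, which is prime. Using $Y \notin P$ and Fact~\ref{lem171124a}, the localization computes as
\[
R_\p \cong \frac{k[\![Y,X_1,\ldots,X_4]\!]_{(X_1,\ldots,X_4)}}{(\fa)} \cong \frac{k(\!(Y)\!)[\![X_1,\ldots,X_4]\!]}{\fc},
\]
which has length (equivalently, multiplicity) $8$, type~$3$, depth~$0$, and embedding codepth~$4$, and still fails~(AC) by the first paragraph. The only genuinely new work is the bookkeeping to verify that the specific ring $B$ of \cite[Corollary~3.3(2)]{jorgensen:nccgr} realizes the asserted numerical invariants (length $8$, type $3$, embedding dimension $4$, defining ideal containing a power of each variable); once that is confirmed, the remainder is a direct transposition of Proof~\ref{para180107a}, and I anticipate no further technical obstacle.
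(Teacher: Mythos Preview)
Your proposal is correct and is precisely the argument the paper has in mind: the paper's proof of this theorem consists of a single sentence stating that one should rerun Proof~\ref{para180107a} with the ring $B$ of \cite[Corollary~3.3(2)]{jorgensen:nccgr} in place of $A$, and your write-up carries out exactly that substitution with the appropriate bookkeeping on the invariants.
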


\begin{disc}\label{disc180128a}
We do not know whether the rings constructed in Theorems~\ref{thm171226a} and~\ref{thm180128a} are minimal with respect to 
embedding codepth, type, or multiplicity. 
It is known from~\cite{avramov:phcnr} that if 
$\ecodepth(R_\p)\leq 3$ or if
$R_\p$ is Gorenstein with $\ecodepth(R_\p)\leq 4$
or if $e(R_\p)\leq 7$, then $R_\p$ satisfies (UAC).
However, it is not clear what happens, say, when $r(R_\p)=2$ or $e(R)=8$. Thus, we pose the following.
\end{disc}

\begin{question}\label{q180128a}
Does there exist a complete local Cohen-Macaulay ring $R$ of Krull dimension 1
with a prime ideal $\p\in\spec(R)$
such that the following hold?
\begin{enumerate}[\rm(a)]
\item
$R$ has type~2, 
multiplicity 8, and embedding codepth 5
and satisfies (UAC),~and
\item 
the localization
$R_{\p}$ is Gorenstein of multiplicity 8
and embedding codepth 5
and does not  satisfy (AC).
\end{enumerate}
or
\begin{enumerate}[\rm(a)]
\item
$R$ has type~2, 
multiplicity 8, and embedding codepth 4
and satisfies (UAC),~and
\item 
the localization
$R_{\p}$ has type 2, multiplicity 8, 
and embedding codepth 4
and does not  satisfy (AC).
\end{enumerate}
\end{question}

\section{Semidualizing Modules and Localization} \label{sec170206a}

Next, we prove our second main theorem from the introduction.

\begin{Proof}[\protect{Proof of Theorem~\ref{thm170508a}}]\label{para180107b}
To build the ring $R$, 
consider the list of variables $\ulx=X_{1,1},X_{2,1},\ldots,X_{1,n},X_{2,n}$, and
set 
$$I=(X_{1,1},X_{2,1})^2+\cdots+(X_{1,n},X_{2,n})^2\subseteq k[\![\ulx]\!].$$
Then the quotient ring
$$S_0\cong k[\![\ulx]\!]/I\cong (k[X_{1,1},X_{2,1}]/(X_{1,1},X_{2,1})^2)\otimes_k\cdots\otimes_k(k[X_{1,n},X_{2,n}]/(X_{1,n},X_{2,n})^2).$$
is artinian and local of type $2^n$, 
embedding dimension $2n$,
and length $3^n$.

Let $Y$ be another variable, and set 
$$S=k[\![\ulx,Y]\!]/(I)\cong S_0[\![Y]\!].$$
Note that $S$ is Cohen-Macaulay of dimension 1, type $2^n$, 
embedding dimension $2n+1$,
and multiplicity  $3^n$.

Also, let $T=k[\![Z]\!]$ where $Z$ is yet another variable, and set 
\begin{equation}\label{eq170508z}
R=S\times_kT\cong \frac{k[\![Z,\underline X,Y]\!]}{(I,Z\ulx,ZY)}.
\end{equation}
The isomorphism comes from the fact that the quotient ring in this display has decomposable maximal ideal
$(Z)\oplus(\ulx,Y)$, by construction; see Fact~\ref{disc161010a}.
Then $R$ is Cohen-Macaulay of dimension 1, type $1+2^n$, and multiplicity $1+3^n$ by
Facts~\ref{para170508b}, \ref{para170508bz}, and~\ref{fact171221a}.
Also,  
we have $\edim(R)=2n+2$, and
$R$ is the completion of a local ring that is essentially of finite type over $k$, 
because of~\eqref{eq170508z} and the specific form of $I$.
In particular, $R$ is Cohen-Macaulay and complete, so it has a dualizing module.
It follows from~\cite[Corollary~4.6]{nasseh:vetfp} that $R$ has exactly two semidualizing modules, up to isomorphism, 
namely its free module of rank 1 and its dualizing module.

From the presentation~\eqref{eq170508z}, it is straightforward to show that the ideal $\p=(Z,\ulx)R$ is prime,
since $I$ is generated by monomials in $\ulx$.
Furthermore, localization at $\p$ inverts the variable $Y$,
so we have
\begin{align}
R_{\p}
&\cong \frac{k[\![Z,\underline X,Y]\!]_{(Z,\ulx)}}{(I,Z\ulx,ZY)}
= \frac{k[\![Z,\underline X,Y]\!]_{(Z,\ulx)}}{(I,Z\ulx,Z)}
\cong \frac{k[\![\underline X,Y]\!]_{(\ulx)}}{(I)}\notag\\
R_{\p}
&\cong \frac{k[\![\underline X,Y]\!]_{(\ulx)}}{(X_{1,1},X_{2,1})^2+\cdots+(X_{1,n},X_{2,n})^2}.
\label{eq180128a}
\end{align}
As 
$R_{\p}$ is a  flat, local extension of the ring
$k[\![\underline X]\!]/I\cong S_0$, and
the maximal ideal of $S_0$ extends to the maximal ideal of $R_{\p}$,
we have $r(R_{\p})=r(S_0)=2^n$ and $e(R_{\p})=e(S_0)=3^n$.
From~\eqref{eq180128a}, we have $\edim(R_\p)=2n$.

Thus, it remains to show that $R_{\p}$ has exactly $2^n$ non-isomorphic semidualizing modules. 
To this end, note that Fact~\ref{lem171124a} shows that
$$R_{\p}\cong\frac{\pseries{k}{Y,\ulx}_{(\ulx)}}{(X_{1,1},X_{2,1})^2+\cdots+(X_{1,n},X_{2,n})^2}
\cong\frac{\psx[k(\!(Y)\!)]}{(X_{1,1},X_{2,1})^2+\cdots+(X_{1,n},X_{2,n})^2}.$$
Using the right-hand quotient in this display, we conclude from~\cite[Corollary~4.11]{sather:divisor} that
$R_\p$ has exactly $2^n$  non-isomorphic semidualizing modules, as desired.\qed
\end{Proof}

\begin{disc}\label{disc170508a}
The special case $n=2$ is already interesting here.
In this case note that $r(R)=5$, which (being prime) automatically implies that $R$ has at most two semidualizing modules
by~\cite[Theorem~C(b)]{sather:bnsc}, and $e(R)=10$.
Furthermore, the localization $R_\p$ has type 4, 
embedding codepth 4,
and multiplicity 9
(the smallest 
values for these invariants
that allow for a non-trivial
semidualizing module by \emph{loc.\ cit.}, \cite{avramov:phcnr}, and Proposition~\ref{prop180113a} below, respectively).

Note that rings of composite (i.e., non-prime) type may have only trivial semidualizing modules. 
So one can imagine 
the existence of
an example of a complete Cohen-Macaulay local ring $A$ of type 4,
embedding codepth 4,
and multiplicity 9
with only trivial semidualizing modules and with a prime ideal $P$ such that
$A_P$ has non-trivial semidualizing modules. 
(It would follow automatically that $A_P$ also has type 4,
embedding codepth 4, and
multiplicity 9 
by~\cite[Theorem~C(b)]{sather:bnsc},~\cite[Corollary~3.3.12]{bruns:cmr},
Proposition~\ref{prop180113a} 
and~\cite[(40.1) Theorem]{nagata:lr}.

Alas, at this point we do not know whether such an example can actually be constructed, hence the following.
\end{disc}

\begin{question}\label{q171118a}
Let $R$ be  
Cohen-Macaulay with $r(R)=4=\ecodepth(R)$, 
$e(R)=9$ and only trivial semidualizing modules. 
For each non-maximal prime ideal $\p\in\spec(R)$, can the localization $R_{\p}$  have non-trivial semidualizing modules?
\end{question}

Noting that the ring in our example is not an integral domain, we also pose the following.

\begin{question}\label{q171118axxx}
Let $R$ be a Cohen-Macaulay local normal domain with  only trivial semidualizing modules. 
For each non-maximal prime ideal $\p\in\spec(R)$, can the localization $R_{\p}$  have non-trivial semidualizing modules?
(Note that such an example would necessarily have $\dim(R)\geq 4$ since it would satisfy $(R_1)$.)
\end{question}

Here is the proposition referred to in Remark~\ref{disc170508a}.
In the case 
where $R$ is generically Gorenstein with $\dim(R)=1$ and
$e(R)\leq 6$, it can be obtained from~\cite[Theorem~1.4(2)]{MR3272067} 
and~\cite[Proposition~3.1]{sather:divisor}.

\begin{prop}\label{prop180113a}
Let $R$ be a Cohen-Macaulay local ring with $e(R)\leq 8$.
Then $R$ has 
at most two non-isomorphic semidualizing modules, namely its free module of rank 1 and its dualizing module (if it has one).
\end{prop}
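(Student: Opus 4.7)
The plan is to reduce to the artinian case by standard faithfully-flat and regular-sequence arguments, and then dispatch dimension zero by a length-based analysis.

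First, I would pass to the $\m_R$-adic completion. Base change along the faithfully flat map $R\to\comp R$ preserves multiplicity and the Cohen-Macaulay property, and a standard descent argument (tracking the defining conditions $R\cong\Hom(C,C)$ and $\Ext^i(C,C)=0$ along a faithfully flat map) induces a bijection on isomorphism classes of semidualizing modules that sends $R$ to $\comp R$ and any dualizing module to a dualizing module. Since every complete Cohen-Macaulay local ring admits a dualizing module, we may assume $R$ is complete and write $D$ for its dualizing module. Next I would cut by a maximal $R$-regular sequence $\x=x_1,\ldots,x_d$ of superficial elements of $\m_R\ssm\m_R^2$ (after first passing to $R(t)=R[t]_{\m_R[t]}$, if necessary, to secure the existence of superficial elements---an operation that affects neither $e(R)$ nor the number of semidualizing modules up to isomorphism). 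Superficiality gives $e(R/(\x))=e(R)$, and for each $R$-regular element $x$ the functor $C\mapsto C/xC$ is a well-known bijection between isomorphism classes of semidualizing $R$-modules and semidualizing $R/xR$-modules, carrying $R$ to $R/xR$ and $D$ to a dualizing module for $R/xR$. Thus the proposition reduces to showing that any artinian local ring $\ol R$ with $\len(\ol R)=e(\ol R)\le 8$ has at most two semidualizing modules up to isomorphism, namely $\ol R$ and its dualizing module $\ol D$.

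In the artinian case I would use Foxby duality: for any semidualizing $\ol R$-module $C$, the module $\md C=\Hom_{\ol R}(C,\ol D)$ is again semidualizing, and the evaluation map $C\otimes_{\ol R}\md C\to\ol D$ together with the corresponding multiplicativity of Poincar\'e and Bass series for semidualizing modules impose tight length, type, and Betti-number constraints on $C$ and $\md C$. Combining these with the known restrictions on semidualizing modules of low multiplicity---most notably the generically Gorenstein dimension-one results of \cite{MR3272067} and \cite[Proposition~3.1]{sather:divisor} valid up to $e\le 6$---and with a short case analysis of the finitely many Hilbert functions admissible when $\len(\ol R)\le 8$, one forces $C\cong\ol R$ or $C\cong\ol D$.

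The main obstacle will be this artinian step. The reductions to dimension zero are routine, but in dimension zero the length identities alone do not immediately exclude every intermediate Bass series, so the argument must combine them with the $r(\ol R)$, $\edim(\ol R)$, and Hilbert function data and either check each admissible Hilbert function by hand or extend the cited generically Gorenstein dimension-one results from $e\le 6$ to $e\le 8$ while dropping the generically Gorenstein hypothesis.
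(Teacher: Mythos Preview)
Your reduction to the artinian case is fine and matches the paper: pass to a flat local extension with infinite residue field, complete, and then mod out by a system of parameters that is a minimal reduction of $\m_R$ (your superficial elements accomplish the same thing). Both approaches arrive at an artinian local ring of length at most $8$.

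The gap is in your artinian step. You propose to combine Foxby duality, multiplicativity of Poincar\'e/Bass series, and a Hilbert-function case analysis, together with the $e\le 6$ results of~\cite{MR3272067} and~\cite{sather:divisor}. But you yourself identify that ``the length identities alone do not immediately exclude every intermediate Bass series,'' and your proposed remedy---extend the cited $e\le 6$ results to $e\le 8$ and drop the generically Gorenstein hypothesis---is essentially a restatement of what is to be proved. The Foxby-duality relation that does work cleanly is the factorization $\beta_0^R(C)\cdot\beta_0^R(\md C)=r(R)$ from~\cite{sather:bnsc}, and that only disposes of the cases $r(R)\le 3$; for $r(R)\ge 4$ you still need a structural argument, and a bare Hilbert-function enumeration is both tedious and not obviously conclusive.

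The paper's resolution of the artinian case is short and different in kind. First, $r(R)\le 3$ is handled immediately by~\cite[Theorem~C(b)]{sather:bnsc}. For $r(R)\ge 4$ one argues as follows: either $\soc(R)$ meets $\m_R\smallsetminus\m_R^2$, in which case a socle element generates a direct summand of $\m_R$, so $\m_R$ is decomposable and~\cite[Corollary~4.6]{nasseh:vetfp} finishes; or $\soc(R)\subseteq\m_R^2$, whence $4\le r(R)\le\len(\m_R^2)$ forces $\edim(R)=\len(R)-1-\len(\m_R^2)\le 8-1-4=3$, and one concludes by the embedding-codepth-$\le 3$ results of~\cite{avramov:phcnr} or~\cite{nasseh:lrec3}. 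This dichotomy replaces your proposed case analysis entirely and ties the argument back to the decomposable-maximal-ideal theme of the paper.
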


\begin{proof}
If $r(R)\leq 3$, then the result follows from~\cite[Theorem~C(b)]{sather:bnsc}.
Thus, we assume without loss of generality that $r(R)\geq 4$.
If $\m_R$ is principal, then $R$ is a hypersurface, hence Gorenstein, and we are done.
Thus, we assume without loss of generality that $\edim(R)\geq 2$.

A result of Grothendieck~\cite[Proposition (0.10.3.1)]{grothendieck:ega3-1} provides
a flat local ring homomorphism $(R,\m_R,k)\to(R',\m_{R'},k')$ such that
$k'$ is algebraically closed and $\m_{R'}=\m_R R'$. Composing with the natural map from $R'$ to its
$\m'$-adic completion, we assume that $R'$ is complete.
Then $R'$ is Cohen-Macaulay  with $e(R')=e(R)\leq 8$.
By~\cite[Theorem II(c)]{frankild:rrhffd},   it suffices to prove the result for $R'$, so we  assume
that $R$ is complete with algebraically closed (hence, infinite) residue field.

Thus, the maximal ideal $\m_R$ has a minimal reduction $(\x)R$ generated by part of a minimal generating sequence for $\m_R$.
In particular, the sequence $\x$ is a system 
of
parameters for $R$, so it is $R$-regular since $R$ is Cohen-Macaulay. 
The quotient $R''=R/(\x)R$ is artinian of length equal to $e(R)\leq 8$.
Another application of \emph{loc. cit.} shows that 
it suffices to prove the result for $R''$, so we  assume
that $R$ is artinian with length at most 8.

Claim: If $\soc(R)\cap(\m_R\ssm\m_R^2)\neq\emptyset$, 
then $\m_R$ is decomposable. 
Indeed, if $z\in \soc(R)\cap(\m_R\ssm\m_R^2)$, then 
the composition of the maps $k\xra\cong Rz\xra\subseteq\m_R\onto\m_R/\m_R^2\onto k$
shows that $Rz$ is a direct summand of $\m_R$. Since $\m_R$ is not principal by assumption,
it follows that  $\m_R$ is decomposable, as desired.

From the claim, if $\soc(R)\cap(\m_R\ssm\m_R^2)\neq\emptyset$, then we are done by~\cite[Corollary~4.6]{nasseh:vetfp}.
Thus, we assume without loss of generality that $\soc(R)\subseteq\m_R^2$.
It follows that
$4\leq r(R)=\len_R(\soc(R))\leq\len_R(\m_R^2)$,
so 
$$\edim(R)=\len(R)-1-\len_R(\m_R^2)\leq 8-1-4=3.$$
Thus, the desired conclusion follows from~\cite{avramov:phcnr} or~\cite[Theorem~A]{nasseh:lrec3}.
\end{proof}

\section{Fiber Products of Finite Cohen-Macaulay Type}\label{sec171118c}

The main goal of this section is to prove Theorem~\ref{thm171221a} from the introduction. 
See
also Theorem~\ref{thm171229a} for a non-Cohen-Macaulay result.

\begin{notn}
\label{notn171222a}
Let $\reg(R)$ denote the set of $R$-regular elements of the local ring $R$,
and let
$Q(R)$ be the total quotient ring of $R$.
To be clear, in the case $\depth(R)=0$, we simply have $Q(R)=R$,
and in the case $\depth(R)\geq 1$, we have $Q(R)=\reg(R)^{-1}R$. 
Note that this is the same as the usual definition of $Q(R)$ where one inverts all the non-zero-divisors of $R$.
Indeed, the depth-0 case is clear.
In the case of positive depth, since $R$ is local, the set $\operatorname{NZD}(R)$ 
of non-zero-divisors of $R$ is the union of $\reg(R)$ and the set $R^\times$ of units of $R$;
from this, it follows readily that $\operatorname{NZD}(R)^{-1}R=\reg(R)^{-1}R$.

Let $\wti R$ denote the integral closure of $R$ in $Q(R)$.
In the case $\depth(R)=0$ this simply yields $\wti R=R$.
\end{notn}

Note that the conclusion of the next result fails if $\depth(S)=0$ because then
$\wti{S\times_kT}=S\times_kT\neq 
\wti S\times\wti T$.

\begin{lem}\label{lem171223az}
Let $R\cong S\times_k T$ be a non-trivial fiber product with 
$\depth(R)\geq 1$, that is, with 
$\depth(S),\depth(T)\geq 1$.
Then there is a natural $R$-module isomorphism
$\wti{R}\cong\wti S\times\wti T$.
\end{lem}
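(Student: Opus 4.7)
The plan is to first identify the total quotient ring $Q(R)$ with $Q(S) \times Q(T)$, and then to pin down $\wti R$ inside this product.

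For the identification of $Q(R)$, the inclusion $R \hookrightarrow S \times T$ composed with the componentwise localization $S \times T \hookrightarrow Q(S) \times Q(T)$ sends any element of $\reg(R)$ to a unit, since $(s,t) \in \reg(R)$ forces $s \in \reg(S)$ and $t \in \reg(T)$. The universal property of localization then produces a natural ring map $\vf : Q(R) \to Q(S) \times Q(T)$, and $\vf$ is injective by componentwise comparison. For surjectivity I would exploit the depth hypothesis: choose $x \in \m_S \cap \reg(S)$ and $y \in \m_T \cap \reg(T)$. Given $(\alpha,\beta) = (s_1/s_2, t_1/t_2) \in Q(S) \times Q(T)$, the element $(s_2 x, t_2 y)$ lies in $\m_S \oplus \m_T \subseteq R$ and is regular in $R$, while $(\alpha,\beta)(s_2 x, t_2 y) = (s_1 x, t_1 y)$ also lies in $R$ (both coordinates vanish modulo the residue field). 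Hence $(\alpha,\beta)$ is in the image of $\vf$.

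With $Q(R) = Q(S) \times Q(T)$ in hand, observe that the idempotent $e = (1,0) \in Q(R)$ satisfies the monic equation $e^2 - e = 0$, so $e \in \wti R$; more usefully, the $R$-subalgebra $R[e] = R + Re$ is a finitely generated $R$-submodule of $Q(R)$, so every one of its elements is integral over $R$. A short direct computation shows $Re = S \oplus 0$ and therefore $R[e] = R + (S \oplus 0) = S \times T$ (given any $(u,v) \in S \times T$, write $(u,v) = (a,v) + (u-a, 0)$ where $a \in S$ is any lift of $\pi_T(v)$; then $(a, v) \in R$). Thus $S \times T \subseteq \wti R$. Since $\wti R$ is integrally closed in $Q(R)$, transitivity of integral dependence together with the fact that integral closure commutes with finite products yields $\wti R \supseteq \wti S \times \wti T$, the latter being the integral closure of $S \times T$ in $Q(S) \times Q(T)$. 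For the reverse inclusion, any $R$-integral equation satisfied by $(\alpha,\beta) \in \wti R$ has coefficients $(a_i,b_i) \in R \subseteq S \times T$, and comparing components produces monic equations over $S$ and $T$ witnessing $\alpha \in \wti S$ and $\beta \in \wti T$.

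The main obstacle is the surjectivity of $\vf$, where the depth assumption enters crucially: without regular elements in $\m_S$ and $\m_T$ one cannot multiply denominators into $R$, and indeed the remark immediately preceding the lemma shows that the conclusion already fails when $\depth(S)=0$. Once $Q(R) = Q(S) \times Q(T)$ is established, the rest is formal, with naturality of the resulting isomorphism flowing from the universal properties of localization and of integral closure.
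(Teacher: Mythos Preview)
Your proof is correct, and the identification $Q(R)\cong Q(S)\times Q(T)$ follows the same line as the paper's (multiply numerator and denominator into $\m_S\oplus\m_T$ to land in $R$). The genuine difference is in the containment $\wti S\times\wti T\subseteq\wti R$. The paper proves this by hand: given $s/\sigma\in\wti S$ with a monic relation $g(X)$ over $S$, it multiplies $g$ by $X$ to force the constant term to vanish, lifts the remaining coefficients from $S$ to $S\times_kT$, and checks that the resulting monic polynomial kills $(s,0)/(\sigma,\tau)$. You instead observe that the idempotent $e=(1,0)$ satisfies $e^2-e=0$, so $R[e]=S\times T$ is a module-finite (hence integral) extension of $R$ inside $Q(R)$; then transitivity of integral dependence combined with the standard fact $\wti{S\times T}=\wti S\times\wti T$ finishes immediately. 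Your route is more conceptual and in fact gives both inclusions at once (an element of $Q(R)$ is integral over $R$ iff it is integral over $S\times T$), so your separate componentwise argument for $\wti R\subseteq\wti S\times\wti T$ is redundant but harmless. The paper's argument is more self-contained, avoiding appeal to the product formula for integral closure, at the cost of the explicit polynomial manipulation.
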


\begin{proof}
Recall that the definition of $S\times_k T$ implies in particular that 
it
is naturally a subring of the cartesian product $S\times T$, so
$$S\times_kT\subseteq S\times T\subseteq\wti S\times\wti T\subseteq Q(S)\times Q(T).$$
Let $i\colon S\times_kT\to Q(S)\times Q(T)$ be the inclusion map.

It is straightforward to show that $\reg(S\times_kT)=\reg(S)\times\reg(T)$.
It follows that 
$$i(\reg(S\times_kT))=i(\reg(S)\times\reg(T))\subseteq Q(S)^\times\times Q(T)^\times=(Q(S)\times Q(T))^\times.$$
Using the universal mapping property for localizations, one concludes that $i$ induces a well-defined ring homomorphism
$i'\colon Q(S\times_kT)\to  Q(S)\times Q(T)$ given by
$i'((s,t)/(\sigma,\tau))=(s/\sigma,t/\tau)$.

Claim 1: the map $i'$ is an isomorphism. 
To see that $i'$ is injective, notice that $0=i'((s,t)/(\sigma,\tau))=(s/\sigma,t/\tau)$ implies for instance $s/\sigma=0$ in $Q(S)$.
Since $Q(S)$ is obtained from $S$ by inverting non-zero-divisors, it follows that $s=0$, and similarly $t=0$, so
$(s,t)/(\sigma,\tau)=(0,0)/(\sigma,\tau)=0$.

To see that $i'$ is surjective, let $(s/\sigma,t/\tau)\in Q(S)\times Q(T)$. 
Note that we cannot automatically conclude that $(s/\sigma,t/\tau)=i'((s,t)/(\sigma,\tau))\in\im(i')$
because we do not know whether $(s,t)$ is in $S\times_kT$. 
However, since $(\sigma,\tau)$ is in $\m_S\times\m_T$, we do have $(s\sigma,t\tau)\in\m_S\times\m_T\subseteq S\times_kT$.
Thus, we have
$$(s/\sigma,t/\tau)=(s\sigma/\sigma^2,t\tau/\tau^2)=i'((s\sigma,t\tau)/(\sigma^2,\tau^2))\in\im(i')$$
so $i'$ is surjective. This establishes Claim~1.

Claim 2: $i'(\wti{S\times_kT})=\wti S\times\wti T$.
To verify the containment $\subseteq$, let $(s,t)/(\sigma,\tau)\in \wti{S\times_kT}$ be given; we need to show that
$(s/\sigma,t/\tau)\in \wti S\times\wti T$.
That is, we need $s/\sigma\in \wti S$
and $t/\tau\in\wti T$.
We have already seen that $s/\sigma\in Q(S)$
and $t/\tau\in Q(T)$.
Consider a monic polynomial
$f(X)=X^n+(a_1,b_1)X^{n-1}+\cdots+(a_n,b_n)$ over $S\times_kT$ satisfied by
$(s,t)/(\sigma,\tau)$.
Set $g(X)=X^n+a_1X^{n-1}+\cdots+a_n\in S[X]$
and $h(X)=X^n+b_1X^{n-1}+\cdots+b_n\in T[X]$.
Then it suffices to show that $g(s/\sigma)=0$ 
and $h(t/\tau)=0$.
Direct computation shows that
\begin{align*}
(0,0)
&=i'(0)
=i'\left(f\left(\frac{(s,t)}{(\sigma,\tau)}\right)\right)
=\left(g\left(\frac{s}{\sigma}\right),h\left(\frac{t}{\tau}\right)\right)
\end{align*}
This establishes 
the containment $\subseteq$.

For the reverse containment $\supseteq$, we first show that 
$i'(\wti{S\times_kT})\supseteq\wti S\times 0$.
To this end, let $s/\sigma\in\wti S$ be given.
By assumption, we have $\sigma\in\m_S$, so $s\sigma\in\m_S$;
thus, because of the equality $s/\sigma=s\sigma/\sigma^2$ we may assume without loss of generality that $s\in\m_S$.
In particular, we have $(s,0)\in S\times_kT$. 

Now, let  $g(X)=X^n+a_1X^{n-1}+\cdots+a_n\in S[X]$ be a monic polynomial satisfied by
$s/\sigma$. Multiply by $X$ if necessary to assume that $a_n=0$.
For $j=1,\ldots,n-1$ let $b_j\in T$ be such that $(a_j,b_j)\in S\times_k T$, and let $b_n=0$.
Then with 
$h(X)=X^n+b_1X^{n-1}+\cdots+b_n\in T[X]$, we have $h(0)=b_n=0$.
Consider the monic polynomial
$f(X)=X^n+(a_1,b_1)X^{n-1}+\cdots+(a_n,b_n)$ over $S\times_kT$,
and let $\tau\in\reg(T)$.
Using the element $(s,0)/(\sigma,\tau)\in Q(S\times_kT)$ 
we again compute directly:
$$i'\left(f\left(\frac{(s,0)}{(\sigma,\tau)}\right)\right)=\left(g\left(\frac{s}{\sigma}\right),h\left(\frac{0}{\tau}\right)\right)=(0,0).$$
The fact that $i'$ is injective implies that $f\left((s,0)/(\sigma,\tau)\right)=0$, so
$(s,0)/(\sigma,\tau)\in\wti{S\times_kT}$.
By construction, we see that
$$\left(s/\sigma,0\right)
=\left(\frac{s}{\sigma},\frac{0}{\tau}\right)
=i'\left(\frac{(s,0)}{(\sigma,\tau)}\right)\in i'(\wti{S\times_kT}).$$
This implies that $i'(\wti{S\times_kT})\supseteq\wti S\times 0$.

The containment $i'(\wti{S\times_kT})\supseteq 0\times \wti T$ is established similarly.
Since $i'(\wti{S\times_kT})$ is closed under addition, it follows that
$i'(\wti{S\times_kT})\supseteq\wti S\times  \wti T$, as desired.
This establishes Claim~2.

In particular, Claim 2 implies that the map $i'$ induces an $R$-module isomorphism
$\wti{R}\cong\wti S\times\wti T$,
as desired.
\end{proof}

\begin{lem}\label{lem171223a}
Let $R\cong S\times_k T$ be a non-trivial fiber product with $\depth(R)\geq 1$, that is, with $\depth(S),\depth(T)\geq 1$.
Then there is a natural $R$-module isomorphism
$(\m_R\wti R+R)/R\cong((\m_S\wti S+S)/S)\oplus((\m_T\wti T+T)/T)$.
In particular, the module $(\m_R\wti R+R)/R$ is cyclic if and only if 
$(\m_S\wti S+S)/S$ and $(\m_T\wti T+T)/T$ are both cyclic and one of them is 0.
\end{lem}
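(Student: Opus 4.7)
The plan is to exploit the identification $\wti{R}\cong\wti S\times\wti T$ furnished by Lemma~\ref{lem171223az}; under it the inclusion $R\subseteq\wti R$ becomes the usual inclusion $S\times_k T\subseteq\wti S\times\wti T$. The first observation is that
\[
\m_R\wti R=(\m_S\wti S)\times(\m_T\wti T)
\]
as $R$-submodules of $\wti R$, since $\m_R=\m_S\oplus\m_T$ splits as two ideals acting only on the respective factors of $\wti S\times\wti T$.

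Given this, I would construct the desired isomorphism as the map induced by the natural projection $\wti S\times\wti T\twoheadrightarrow\wti S/S\oplus\wti T/T$: it factors through $\wti R/R$, since $R\subseteq S\times T$ lies in the kernel, and, restricted to $(\m_R\wti R+R)/R$, it lands in $(\m_S\wti S+S)/S\oplus(\m_T\wti T+T)/T$ by the above description of $\m_R\wti R$. Surjectivity is then immediate, since an arbitrary $\wti s\in\m_S\wti S$ is the image of $(\wti s,0)\in\m_R\wti R$, and similarly on the $T$-side.

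The main obstacle is injectivity. If $(\wti s,\wti t)+R$ with $\wti s\in\m_S\wti S$ and $\wti t\in\m_T\wti T$ is killed, then $\wti s\in S$ and $\wti t\in T$, so I need $(\wti s,\wti t)\in S\times_k T$, i.e., $\pi_S(\wti s)=\pi_T(\wti t)$ in $k$. It therefore suffices to show $\wti s\in\m_S$ and $\wti t\in\m_T$, so the crux is the claim $\m_S\wti S\cap S=\m_S$. My argument is the following: because $\wti S$ is integral over $S$, lying-over implies that every maximal ideal of $\wti S$ contracts to $\m_S$, so $\m_S\wti S$ lies in the Jacobson radical of $\wti S$ and is in particular a proper ideal; hence $\m_S\wti S\cap S$ is a proper ideal of $S$ containing $\m_S$, forcing equality. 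The same reasoning handles $T$.

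For the ``in particular'' statement, I would invoke the fact that over the local ring $R$, a direct sum of two $R$-modules is cyclic if and only if one summand vanishes and the other is cyclic: indeed, if $A\oplus B=R(a,b)$ is cyclic, then writing $(a,0)=r(a,b)$ yields $(1-r)a=0$ and $rb=0$, and since $R$ is local either $r$ or $1-r$ is a unit, forcing $b=0$ or $a=0$; the converse is immediate. Applying this to the direct-sum decomposition just established completes the proof.
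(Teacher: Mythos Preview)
Your proof is correct and follows essentially the same route as the paper's. Both hinge on Lemma~\ref{lem171223az} to identify $\wti R\cong\wti S\times\wti T$ and on the contraction $\m_A\wti A\cap A=\m_A$ proved via lying-over; the only organizational difference is that the paper first records the general isomorphism $(\m_A\wti A+A)/A\cong\m_A\wti A/\m_A$ and then splits the right-hand side, whereas you build the map directly and check injectivity and surjectivity, and you give a more explicit justification of the ``in particular'' clause than the paper does.
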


\begin{proof}
We note that if $A$ is a local ring, then $\m_A\wti A\cap A=\m_A$. Indeed, 
the containment $\m_A\wti A\cap A\supseteq\m_A$ holds because $\wti A$ is a ring extension of $A$.
For the reverse containment,
use the Cohen-Seidenberg theorems to find  a maximal ideal $\n\subseteq\wti A$ such that
$\n\cap A=\m_A$. This implies that $\m_A\supseteq\m_A\wti A\cap A$,
as desired.

In particular, this implies that
\begin{align*}
\frac{\m_A\wti A+A}{A}
&\cong\frac{\m_A\wti A}{\m_A\wti A\cap A}
=\frac{\m_A\wti A}{\m_A}.
\end{align*}

Now, for the isomorphism
$(\m_R\wti R+R)/R\cong((\m_S\wti S+S)/S)\oplus((\m_T\wti T+T)/T)$,
we use the previous paragraph along with Lemma~\ref{lem171223az}:
\begin{align*}
\frac{\m_R\wti R+R}{R}
&\cong\frac{\m_R\wti R}{\m_R}
\cong\frac{(\m_S\oplus\m_T)(\wti S\times\wti T)}{\m_S\oplus\m_T}
\cong\frac{(\m_S\wti S)\oplus(\m_T\wti T)}{\m_S\oplus\m_T}\\
&\cong\frac{\m_S\wti S}{\m_S}\oplus\frac{\m_T\wti T}{\m_T}
\cong\frac{\m_S\wti S+S}{S}\oplus\frac{\m_T\wti T+T}{T}
\end{align*}
In particular, this shows that
$(\m_R\wti R+R)/R$ is cyclic if and only if 
$(\m_S\wti S+S)/S$ and $(\m_T\wti T+T)/T$ are both cyclic and 
one of them is 0.
\end{proof}

\begin{lem} \label{prop171230a}
Assume that $\m_R$ is
decomposable. 
If $R$ has finite Cohen-Macaulay type, then $\dim(R)\geq 1$.
\end{lem}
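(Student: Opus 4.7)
The plan is to prove the contrapositive: assuming $\dim(R)=0$, so that $R$ is artinian local and every finitely generated $R$-module is MCM, I would exhibit infinitely many pairwise nonisomorphic indecomposable finitely generated $R$-modules. This contradicts finite Cohen-Macaulay type directly.

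First, using Fact~\ref{disc161010a}, I would write $R\cong S\times_k T$ with $\m_R=I\oplus J$ and both summands nonzero, where $S=R/I$ and $T=R/J$ are artinian local rings different from $k$. Because multiplication in the fiber product forces $IJ=0$, we have $\m_R^2=I^2+J^2$. Passing to the quotient $\bar R = R/(I^2+J^2)$, the new maximal ideal satisfies $\bar\m_R^{\,2}=0$, while Nakayama applied to $I$ and $J$ shows that the decomposition $\m_R=I\oplus J$ descends to $\bar\m_R$, yielding $\edim(\bar R)=\edim(S)+\edim(T)\geq 2$.

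The key input is the classical fact that any commutative artinian local ring $A$ with $\m_A^2=0$ and $\edim(A)\geq 2$ admits infinitely many pairwise nonisomorphic indecomposable finitely generated modules. The cleanest way I know to see this is that a f.g.\ $A$-module $M$ is determined by the pair of $k$-vector spaces $V=M/\m_A M$ and $W=\m_A M$ together with the action map $\m_A\otimes_k V\to W$; this identifies the category of finitely generated $A$-modules with the category of finite-dimensional representations of the generalized Kronecker quiver on two vertices with $\edim(A)$ parallel arrows. That quiver has infinitely many indecomposable representations (for instance, preprojective ones of dimension vector $(n,n+1)$ for each $n\geq 1$, which work over any field).

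Finally, pulling back these indecomposable $\bar R$-modules along the surjection $R\twoheadrightarrow \bar R$ produces infinitely many pairwise nonisomorphic indecomposable $R$-modules, since any $R$-module structure factoring through $\bar R$ has the same endomorphism ring (and hence the same indecomposability and isomorphism type) as its $\bar R$-structure. The main obstacle is the representation-theoretic input about $\m^2=0$ rings; once that fact is invoked (or, if preferred, verified via an explicit construction of indecomposable $\bar R$-modules of unbounded length using the Kronecker-type structure), the remaining reductions are routine.
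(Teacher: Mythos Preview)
Your proof is correct and takes a genuinely different route from the paper. The paper dispatches the lemma in a single line by invoking the known classification of artinian local rings of finite representation type \cite[3.3~Theorem]{MR2919145}: such a ring is a principal ideal ring, which immediately contradicts decomposability of $\m_R$. You instead pass to $\bar R=R/\m_R^2$ and argue directly that a local ring with square-zero maximal ideal and embedding dimension at least $2$ has infinite representation type. Your route is more self-contained and exposes the mechanism behind the cited result, at the cost of importing some quiver representation theory; the paper's route is shorter but treats the key fact as a black box.

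One small caution: the assignment $M\mapsto(M/\m_A M,\,\m_A M,\,\mu)$ is not literally an equivalence of categories with representations of the generalized Kronecker quiver. The functor is neither faithful (any $A$-linear map factoring as $M\twoheadrightarrow M/\m_A M\to\m_A M'\hookrightarrow M'$ is sent to zero) nor essentially surjective (it misses exactly the representations whose structure map is not surjective, for instance the simple at the sink). What is true, and is all you need, is that the functor induces a bijection between isomorphism classes of indecomposable finitely generated $A$-modules and isomorphism classes of indecomposable representations with surjective $\mu$; the inverse sends such a representation to $A^{\dim_k V}/\ker\mu$, and this construction works without assuming $A$ contains its residue field. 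Since the dimension-$(n,n+1)$ representations you invoke have surjective structure map for every $n\geq 1$, your infinite family of pairwise non-isomorphic indecomposables transfers intact, and the argument goes through.
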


\begin{proof}
If 
$R$ has finite Cohen-Macaulay type and
$\dim(R)=0$, then $R$ is a principal ideal ring by~\cite[3.3~Theorem]{MR2919145};
however, this contradicts the fact that $\m_R$ is decomposable.
\end{proof}

\begin{disc}
Lemma~\ref{prop171230a} shows that if $(S,\m_S,k)$ and $(T,\m_T,k)$ have finite Cohen-Macaulay type,
then $R=S\times_kT$ may not have finite Cohen-Macaulay type.
Indeed, consider the rings 
$S=k[x]/(x^2)$ and $T=k[y]/(y^2)$, both of which have finite Cohen-Macaulay type
since they are principal ideal rings; see~\cite[3.3~ Theorem]{MR2919145}.
However, the fiber product
$R=S\times_kT\cong k[x,y]/(x^2,xy,y^2)$ is not a principal ideal ring,
so it has infinite (i.e., non-finite) Cohen-Macaulay type by \emph{loc.\ cit}.
\end{disc}

\begin{lem}\label{lem171229a}
Let $(R,\m_{R})$ be a local 
non-artinian
ring such that $\m_{R}^i\cap\Nil(R)=0$ for $i\gg 0$. 
Then $e(R/\Nil(R))=e(R)$.
In particular, if $R$ has dimension 1 and finite Cohen-Macaulay type, then $e(R)=e(R/\Nil(R))\leq 3$.
\end{lem}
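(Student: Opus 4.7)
The plan for the first assertion is to push a standard length-additivity argument through the short exact sequence $0 \to \Nil(R) \to R \to \ol{R} \to 0$, where $\ol{R} := R/\Nil(R)$. Intersecting with $\m_R^{n+1}$ (equivalently, a snake-lemma chase) yields the exact sequence
\[
0 \to \Nil(R)/(\Nil(R) \cap \m_R^{n+1}) \to R/\m_R^{n+1} \to \ol{R}/\m_{\ol{R}}^{n+1} \to 0,
\]
so by additivity of length
\[
\len(R/\m_R^{n+1}) = \len(\ol{R}/\m_{\ol{R}}^{n+1}) + \len\bigl(\Nil(R)/(\Nil(R) \cap \m_R^{n+1})\bigr).
\]
By hypothesis, the rightmost term equals the constant $\len(\Nil(R))$ for $n \gg 0$, and this constant is finite because $\Nil(R)$ then embeds into the artinian module $R/\m_R^{n+1}$. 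Since $R$ is non-artinian and the nilradical does not affect Krull dimension, $\dim(R) = \dim(\ol{R}) \geq 1$. Hence both Hilbert polynomials have positive degree, a constant difference in their values is absorbed by the lower-order terms, and matching leading coefficients gives $e(R) = e(\ol{R})$.

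For the "in particular" conclusion, my first step is to verify that the hypothesis of the main part is automatic. When $R$ has dimension $1$ and finite Cohen-Macaulay type, Auslander's isolated-singularity theorem ensures that $R_\p$ is regular --- and hence a field, since each such $\p$ is a minimal prime --- for every $\p \in \spec(R) \ssm \{\m_R\}$. It follows that $\Nil(R)_\p = \Nil(R_\p) = 0$ at every non-maximal prime, so $\Nil(R)$ is concentrated at $\m_R$ and has finite length; the Artin-Rees lemma then forces $\m_R^i \cap \Nil(R) = 0$ for $i \gg 0$, and the first part applies to give $e(R) = e(\ol{R})$. Next, I would observe that any maximal Cohen-Macaulay $\ol{R}$-module, viewed as an $R$-module via restriction of scalars along $R \onto \ol{R}$, retains the same dimension and depth and remains indecomposable, so finite Cohen-Macaulay type descends to $\ol{R}$. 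Finally, because $\ol{R}$ is a reduced, one-dimensional, local Cohen-Macaulay ring of finite Cohen-Macaulay type, the classical multiplicity bound (due originally to Drozd and Roiter, and extended by Green and Reiner) yields $e(\ol{R}) \leq 3$.

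The length computation in the first half is routine. The main obstacle I anticipate is in the second half, namely invoking Auslander's isolated-singularity theorem and the classical multiplicity bound in precisely the generality required by the statement; one might need to first pass to a completion (and check that multiplicity, finite Cohen-Macaulay type, and the behavior of the nilradical transfer cleanly) before those classical results can be applied directly.
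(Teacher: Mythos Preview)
Your proof of the first assertion is correct but takes a genuinely different route from the paper. You work directly with Hilbert functions: the short exact sequence gives $\len(R/\m_R^{n+1}) - \len(\ol{R}/\m_{\ol{R}}^{n+1}) = \len(\Nil(R))$ for $n \gg 0$, and since $\dim(R) \geq 1$ this constant difference does not affect the normalized leading coefficient. The paper instead uses the associativity formula~\eqref{eq180119a}: localizing the hypothesis $\m_R^i \cap \Nil(R) = 0$ at a minimal prime $\p$ (necessarily $\neq \m_R$, since $R$ is non-artinian) gives $(\m_R)_\p^i \cap \Nil(R)_\p = R_\p \cap \p_\p = \p_\p = 0$, so $R_\p$ is a field with $\len(R_\p) = 1$; since $R'_{\p'}$ is also a field and $R/\p \cong R'/\p'$, the two associativity sums agree term by term. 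Your argument is more elementary and self-contained; the paper's argument extracts the additional geometric fact that $R$ is generically a field, which is not needed for the multiplicity statement but is a pleasant byproduct.

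For the ``in particular'' clause, the paper simply invokes \cite[4.16~Theorem]{MR2919145}, which packages precisely the two facts you reconstruct: that finite Cohen--Macaulay type in dimension~1 forces $\m_R^i \cap \Nil(R) = 0$ for $i \gg 0$, and that it descends to $R/\Nil(R)$. Your reconstruction via Auslander's isolated-singularity theorem plus Artin--Rees is morally the same argument, and the concern you flag is legitimate: the standard isolated-singularity theorem is stated for Cohen--Macaulay local rings, so applying it to a possibly non-CM $R$ requires either a non-CM version or precisely the 1-dimensional result in Leuschke--Wiegand that the paper cites. The final multiplicity bound $e(\ol{R}) \leq 3$ is drawn from the same source in both approaches (the paper cites \cite[4.10 and A.29(iv)~Theorems]{MR2919145}).
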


\begin{proof}
Set $R'=R/\Nil(R)$,
and observe that the map $\spec(R)\to\spec(R')$ given by $\p\mapsto\p':=\p/\Nil(R)$ is a well-defined bijection.
Furthermore the isomorphism $R'/\p'\cong R/\p$ implies that $\dim(R')=\dim(R)$ and $\dim(R'/\p')=\dim(R/\p)$.
In particular, one has $\dim(R'/\p')=\dim(R')$ if and only if $\dim(R/\p)=\dim(R)$. 

Now, for each 
prime
$\p\in
\Min(R)$, 
we have 
$\p\neq\m_{R}$ since $R$ is not artinian, so
$$0=(\m_{R}^i\cap\Nil(R))_\p=(\m_{R})_\p^i\cap\Nil(R)_\p=R_\p\cap\p_\p=\p_\p.$$
Thus, the ring $R_\p$ is a field, so $\len(R_\p)=1$.
Since the ring $R'$ is reduced, the ring $(R')_{\p'}$ is also a field, so 
$\len(R'_{\p'})=1$ as well. 
Thus, the isomorphism $R'/\p'\cong R/\p$ conspires with 
the  
formula~\eqref{eq180119a}
to imply the following.
\begin{align*}
e(R)
=\sum_\p\len(R_\p)e(R/\p)
=\sum_{\p'}\len(R'_{\p'})e(R'/\p')
=e(R')
\end{align*}
Here the 
first
sum is taken over all $\{\p\in\Min(R)\mid\dim(R/\p)=\dim(R)\}$,
and similarly for the second sum.

In particular, if $R$ has dimension 1 and finite Cohen-Macaulay type, then $R'$ has finite Cohen-Macaulay type 
and we have $\m_{R}^i\cap\Nil(R)=0$ for $i\gg 0$
by~\cite[4.16~Theorem]{MR2919145}.
Thus, it follows from the previous paragraph that $e(R)=e(R')\leq 3$
because of~\cite[4.10~and~A.29(iv)~Theorems]{MR2919145}.
\end{proof}

\begin{lem} \label{lem171230a}
Let $R\cong S\times_k T$ be a non-trivial fiber product, and
let $M$ and $M'$ be finitely generated $S$-modules. 
\begin{enumerate}[\rm(a)]
\item \label{lem171230a1}
The module $M\oplus 0$ is indecomposable over $R$  if and only if $M$ is indecomposable over $S$.
\item \label{lem171230a2}
One has $M\oplus 0\cong M'\oplus 0$ over $R$ if and only if $M\cong M'$ over $S$.
\item \label{lem171230a3}
One has $\depth_R(M\oplus 0)=\depth_S(M)$. 
\item \label{lem171230a4}
The module $M\oplus 0$ is  maximal Cohen-Macaulay over $R$ 
if and only if $M$ is maximal Cohen-Macaulay over $S$ and $\dim(S)\geq\dim(T)$.
\end{enumerate}
\end{lem}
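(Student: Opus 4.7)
The plan is to exploit a single structural observation: since $\m_T=0\oplus\m_T$ annihilates $M\oplus 0$, the $R$-action on $M\oplus 0$ factors through the quotient map $R\onto R/\m_T\cong S$. Consequently, every $R$-submodule of $M\oplus 0$ is an $S$-submodule of $M$, and every $R$-linear map $M\oplus 0\to M'\oplus 0$ is automatically $S$-linear when viewed as a map $M\to M'$. This yields canonical identifications $\End_R(M\oplus 0)=\End_S(M)$ and $\Hom_R(M\oplus 0,M'\oplus 0)=\Hom_S(M,M')$, from which parts~(a) and~(b) follow at once: the idempotents controlling direct-sum decompositions and the isomorphisms between modules are the same over $R$ and $S$.

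For part~(c), I would invoke the standard change-of-rings principle that, for any surjection of local rings $A\onto B$ and any finitely generated $B$-module $N$, one has $\depth_A(N)=\depth_B(N)$. Indeed, any $N$-regular sequence in $\m_A$ descends to an $N$-regular sequence in $\m_B$ because the two actions agree, and conversely any $N$-regular sequence in $\m_B$ lifts along the surjection $\m_A\onto\m_B$ to an $N$-regular sequence in $\m_A$. Applied to $R\onto S$ with $N=M\oplus 0$, this produces the desired equality $\depth_R(M\oplus 0)=\depth_S(M)$.

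For part~(d), I combine part~(c) with the dimension formula $\dim(R)=\max\{\dim(S),\dim(T)\}$ from Fact~\ref{para170508b}. The module $M\oplus 0$ is maximal Cohen-Macaulay over $R$ exactly when $\depth_R(M\oplus 0)=\dim(R)$, i.e., when $\depth_S(M)=\max\{\dim(S),\dim(T)\}$. Since $\depth_S(M)\leq\dim(S)$ always holds, this equality forces both $\dim(S)\geq\dim(T)$ and $\depth_S(M)=\dim(S)$, which together say precisely that $M$ is maximal Cohen-Macaulay over $S$ and $\dim(S)\geq\dim(T)$; the reverse implication is immediate from the same chain.

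I do not anticipate any substantive obstacle: the entire lemma reduces to the factoring observation together with one classical change-of-rings fact about depth, both of which are routine. The only point requiring a bit of care is making the identification of $R$-module structure with $S$-module structure on $M\oplus 0$ fully explicit before quoting the various equivalences; once this is in place, parts (a)--(d) fall out in sequence.
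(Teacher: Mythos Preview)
Your proposal is correct and follows essentially the same approach as the paper: the key observation that $\m_T$ annihilates $M\oplus 0$ so that the $R$-action factors through $R/\m_T\cong S$ is exactly what the paper uses (explicitly in its alternate proof of part~(a) and implicitly elsewhere), and your treatment of parts~(b) and~(d) matches the paper's argument. The only cosmetic difference is in part~(c), where the paper verifies directly that a sequence $(s_1,t_1),\ldots,(s_n,t_n)\in\m_R$ is $(M\oplus 0)$-regular if and only if $s_1,\ldots,s_n\in\m_S$ is $M$-regular, whereas you package this as the general change-of-rings fact $\depth_A(N)=\depth_B(N)$ for a local surjection $A\onto B$; the content is identical.
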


\begin{proof}
\eqref{lem171230a1}
It is straightforward to show that if $M$ is decomposable over $S$, then $M\oplus 0$ is decomposable over $R$.
Conversely, if $M\oplus 0$ is decomposable over $R$, then it must decompose as
$M\oplus 0\cong (M_1\oplus 0)\oplus(M_2\oplus 0)$, so $M$ must be decomposable over $S$.
(Alternately, if $M\oplus 0$ is decomposable over $R$, then it must admit an idempotent $R$-module endomorphism
$\phi\colon M\oplus 0\to M\oplus 0$ with $\phi\neq 0,1$. 
Since $M\oplus 0$ is annihilated by $\m_T$ and $R/\m_T\cong S$, it follows that $\phi$ is an 
idempotent 
$S$-module
endomorphism with $\phi\neq 0,1$
so $M$ is decomposable over $S$.)

\eqref{lem171230a2}
As in the alternate proof in the preceding paragraph, the point here is that
$\Hom_R(M\oplus 0,M'\oplus 0)$ is naturally identified with $\Hom_S(M,M')$,
and via this identification $R$-module isomorphisms on  one side correspond exactly to $S$-module isomorphisms on the other side. 

\eqref{lem171230a3}
One checks readily that a sequence $(s_1,t_1),\ldots,(s_n,t_n)\in\m_S\oplus\m_T=\m_R$ is $M\oplus 0$-regular
if and only if the sequence $s_1,\ldots,s_n\in\m_S$ is $M$-regular,
if and only if the sequence $(s_1,0),\ldots,(s_n,0)\in\m_S\oplus\m_T=\m_R$ is $M\oplus 0$-regular.
This yields inequalities $\depth_R(M\oplus 0)\leq\depth_S(M) \leq \depth_R(M\oplus 0)$, as desired.

\eqref{lem171230a4}
For the forward implication
assume that $M\oplus 0$ is  maximal Cohen-Macaulay over $R$. 
This explains the second equality in the next sequence
$$\depth_S(M)=\depth_R(M\oplus 0)=\dim(R)=
\max\{\dim(S),\dim(T)\}
\geq\dim(S).$$
The first equality is from part~\eqref{lem171230a3}, and the third one is from Fact~\ref{para170508b}.
Since the inequality $\depth_S(M)\leq\dim(S)$ is automatic, it follows that the inequality in this display is an equality,
so  $M$ is a maximal Cohen-Macaulay over $S$ and $\dim(S)\geq
\max\{\dim(S),\dim(T)\}
\geq\dim(T)$, as desired.

Conversely, assume that $M$ is maximal Cohen-Macaulay over $S$ and $\dim(S)\geq\dim(T)$.
Then as in the preceding paragraph, we have
$$\depth_R(M\oplus 0)=\depth_S(M)=\dim(S)=
\max\{\dim(S),\dim(T)\}
=\dim(R)$$
so $M\oplus 0$ is  maximal Cohen-Macaulay over $R$, as desired.
\end{proof}

The next result follows directly from Lemma~\ref{lem171230a}.

\begin{prop} \label{prop171230b}
Let $R\cong S\times_k T$ be a non-trivial fiber product.
If $\dim(S)\geq\dim(T)$
and $R$ has finite Cohen-Macaulay type, then 
$S$ has finite Cohen-Macaulay type.
\end{prop}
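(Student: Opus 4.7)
The plan is to produce an injection from isomorphism classes of indecomposable maximal Cohen-Macaulay $S$-modules into isomorphism classes of indecomposable maximal Cohen-Macaulay $R$-modules, and then appeal to finiteness on the $R$-side. The construction of the injection is the map $M \mapsto M \oplus 0$ from $S$-modules to $R$-modules, where as usual we view an $S$-module as an $R$-module via the surjection $R \to S$ whose kernel is $\m_T$.

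First I would take an arbitrary indecomposable maximal Cohen-Macaulay $S$-module $M$. Since we are assuming $\dim(S)\geq\dim(T)$, Lemma~\ref{lem171230a}\eqref{lem171230a4} tells us that $M\oplus 0$ is a maximal Cohen-Macaulay $R$-module; and Lemma~\ref{lem171230a}\eqref{lem171230a1} tells us that $M\oplus 0$ is indecomposable over $R$. Thus, $M\mapsto M\oplus 0$ sends indecomposable maximal Cohen-Macaulay $S$-modules to indecomposable maximal Cohen-Macaulay $R$-modules.

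Next, I would invoke Lemma~\ref{lem171230a}\eqref{lem171230a2} to check that this assignment is well-defined and injective on isomorphism classes: if $M\cong M'$ over $S$ then obviously $M\oplus 0\cong M'\oplus 0$ over $R$, and conversely any $R$-module isomorphism $M\oplus 0\cong M'\oplus 0$ forces $M\cong M'$ over $S$. Hence the number of isomorphism classes of indecomposable maximal Cohen-Macaulay $S$-modules is bounded above by the number of isomorphism classes of indecomposable maximal Cohen-Macaulay $R$-modules.

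Finally, since $R$ has finite Cohen-Macaulay type by assumption, the latter number is finite, so the former is finite as well, proving that $S$ has finite Cohen-Macaulay type. There is no real obstacle here: the proposition is genuinely a direct bookkeeping consequence of Lemma~\ref{lem171230a}, and the role of the hypothesis $\dim(S)\geq\dim(T)$ is precisely to make part~\eqref{lem171230a4} of that lemma produce $R$-maximal Cohen-Macaulay modules from $S$-maximal Cohen-Macaulay modules.
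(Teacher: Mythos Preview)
Your proposal is correct and is precisely the argument the paper has in mind: the paper's entire proof is the single sentence ``The next result follows directly from Lemma~\ref{lem171230a},'' and you have simply unpacked how parts~\eqref{lem171230a1}, \eqref{lem171230a2}, and~\eqref{lem171230a4} of that lemma combine to give an injection on isomorphism classes of indecomposable maximal Cohen--Macaulay modules.
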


\begin{disc}
Remark~\ref{disc171230a} includes an example showing that, in the notation of Proposition~\ref{prop171230b},
If $\dim(S)>\dim(T)$
and $R$ has finite Cohen-Macaulay type, then 
$T$ may have infinite Cohen-Macaulay type.
\end{disc}

\begin{Proof}[\protect{Proof of Theorem~\ref{thm171221a}}]\label{para180107c}
Fact~\ref{disc161010a} implies that $R\cong S\times_k T$ is a non-trivial fiber product.

\eqref{thm171221a1}$\implies$\eqref{thm171221a3}
Assume that $R$ is Cohen-Macaulay and has finite Cohen-Macaulay type. 
In particular, we have $\dim(R)\leq 1$ by Fact~\ref{para170508b}.
Thus, we have $\dim(R)=1$ by Lemma~\ref{prop171230a}.
Since $R$ is Cohen-Macaulay, 
$S$ and $T$ must be Cohen-Macaulay of Krull dimension 1
by Fact~\ref{para170508b}, and they must have finite Cohen-Macaulay type by Proposition~\ref{prop171230b}.
Thus, 
Lemma~\ref{lem171229a} implies that $e(R)\leq 3$.

\eqref{thm171221a3}$\implies$\eqref{thm171221a2}
Assume that $R\cong S\times_k T$ where
$S$ and $T$ are both Cohen-Macaulay of Krull dimension 1 with finite Cohen-Macaulay type and $e(R)\leq 3$.
Fact~\ref{fact171221a} implies that $e(S)+e(T)=e(R)\leq 3$.
Since $e(S)$ and $e(T)$ are positive integers, we re-order $S$ and $T$ if necessary to conclude that
$e(S)\leq 2$ and $e(T)=1$. In particular, since $T$ is Cohen-Macaulay, it is unmixed, and it follows that $T$ is regular.
Using~\cite[Exercise~4.6.14(b)]{bruns:cmr}, we have
$2 \ge e(S) \ge \ecodepth(S)+1$.
This implies $\ecodepth(S) \le 1$, so $S$ is a hypersurface.
Also, $S$ is analytically unramified by~\cite[4.15~Proposition]{MR2919145}.

\eqref{thm171221a2}$\implies$\eqref{thm171221a1}
Assume that $R\cong S\times_k T$ where
$(S,\m_S,k)$ is 
an
analytically unramified 
hypersurface
of Krull dimension 1 and multiplicity at most 2
and $(T,\m_T,k)$ is regular of Krull dimension 1.
Facts~\ref{para170508b} and~\ref{fact171221a} imply that $R$ is Cohen-Macaulay of dimension 1
with $e(R)=e(S)+e(T)\leq 2+1=3$. 
Since $T$ is regular, its completion $\comp T$ is regular, so reduced.
By assumption, $\comp S$ is also reduced, hence so is $\comp R\cong\comp S\times_k\comp T$.
Thus, according to~\cite[4.10~Theorem]{MR2919145}, it remains to show that the $R$-module
$(\m_R\wti R+R)/R$ is cyclic; see the introduction of~\cite[Chapter~4]{MR2919145}.

Since $T$ is integrally closed, we have $(\m_T\wti T+T)/T=(\m_TT+T)/T=T/T=0$.
Since $S$ is Cohen-Macaulay and analytically unramified of dimension 1 and multiplicity 2, we know that
$S$ has finite Cohen-Macaulay type by~\cite[4.18~Theorem]{MR2919145},
hence $(\m_S\wti S+S)/S$ is cyclic by~\cite[4.10~Theorem]{MR2919145}.
It follows from Lemma~\ref{lem171223a} that $(\m_R\wti R+R)/R$ is cyclic, as desired.\qed
\end{Proof}

\begin{disc}
Note that the analytically unramified condition on $S$ in condition~\eqref{thm171221a2}
of Theorem~\ref{thm171221a} is necessary. 
Indeed, consider for example the ring $R=k[\![X,Y]\!]/(X^2)\times_kk[\![Z]\!]\cong k[\![X,Y,Z]\!]/(X^2,XZ,YZ)$. Then
$R$ is not reduced, so~\cite[4.15~Proposition]{MR2919145}
implies that 
$R$ has infinite Cohen-Macaulay type.
\end{disc}

\begin{cor}\label{cor171229a}
Assume that $R$ is complete and equicharacteristic 
with decomposable maximal ideal,
and that $k$ is an algebraically closed field of characteristic $0$.
Then the following conditions are equivalent.
\begin{enumerate}[\rm(i)]
\item\label{cor171229a1}
$R$ is Cohen-Macaulay  of finite 
Cohen-Macaulay
type.
\item\label{cor171229a2}
$R$ is isomorphic to one of the following.
\begin{enumerate}[\rm(1)]
\item
$k[\![x]\!]\times_k k[\![z]\!]\cong k[\![x,z]\!]/(xz)$,
\item
$k[\![x,y]\!]/(x^2-y^n)\times_k k[\![z]\!]\cong k[\![x,y,z]\!]/(x^2-y^n,xz,yz)$ for some $n\ge2$.
\end{enumerate}
\end{enumerate}
\end{cor}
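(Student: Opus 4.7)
The plan is to use Theorem~\ref{thm171221a} to reduce the problem to classifying the two fiber-product factors, and then invoke the structure theory of complete local rings over an algebraically closed field of characteristic $0$ to pin down each factor explicitly.

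First, under (i), Fact~\ref{disc161010a} and Theorem~\ref{thm171221a} write $R\cong S\times_k T$ as a non-trivial fiber product, where $S$ is an analytically unramified hypersurface of Krull dimension $1$ with $e(S)\le 2$ and $T$ is a DVR. Since $R$ is complete and equicharacteristic, so are $S$ and $T$ (as quotients of $R$, via Fact~\ref{disc161010a}). Cohen's structure theorem then identifies the complete DVR $T$ with $k[\![z]\!]$. For $S$, the case $e(S)=1$ forces $S$ to be regular of dimension $1$, hence $S\cong k[\![x]\!]$, and Fact~\ref{disc161010a} yields case (1).

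The meat of the argument is the case $e(S)=2$. Here $S$ is a complete reduced equicharacteristic hypersurface with $\depth(S)=1$ and $\ecodepth(S)\le 1$, so $\edim(S)\le 2$; strict inequality would force $S$ regular, contradicting $e(S)=2$, so Cohen's theorem gives $S\cong k[\![x,y]\!]/(f)$ with $f$ of order exactly $2$. I would then bring $f$ to normal form: after a linear change of variables one may assume $f$ is regular in $y$ of order $2$, so Weierstrass preparation yields $f=u\cdot(y^2+a_1(x)y+a_2(x))$ with $u\in k[\![x,y]\!]^\times$ and $a_i\in xk[\![x]\!]$; completing the square (using $\operatorname{char}(k)=0$) produces an isomorphic presentation $k[\![x,y]\!]/(y^2-g(x))$ with $g\in xk[\![x]\!]$. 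Reducedness of $S$ forces $g\ne0$, so $g(x)=x^m v(x)$ with $v(0)\ne 0$ and $m\ge 1$. Using that $k$ is algebraically closed of characteristic $0$, an $m$-th root of $v$ exists in $k[\![x]\!]^\times$, and the substitution $x'=x\,v(x)^{1/m}$ produces $S\cong k[\![x,y]\!]/(y^2-x^n)$. The case $n=1$ is excluded (it would make $S$ regular), so $n\ge 2$, and renaming variables delivers case (2) via Fact~\ref{disc161010a}.

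For the converse (ii)$\implies$(i), each ring listed in (ii) is manifestly of the form $S\times_k k[\![z]\!]$ where $k[\![z]\!]$ is a DVR and $S$ is either regular or a complete (hence analytically unramified, since reduced as verified above) hypersurface of Krull dimension $1$ with $e(S)\le 2$. Theorem~\ref{thm171221a}\,\eqref{thm171221a2}$\Rightarrow$\eqref{thm171221a1} then gives that $R$ is Cohen-Macaulay of finite Cohen-Macaulay type. The main obstacle I expect is the Weierstrass-type normal-form step: one must verify that the combination of hypersurface, dimension $1$, multiplicity $2$, and \emph{reduced} isolates precisely the family $k[\![x,y]\!]/(x^2-y^n)$ with $n\ge 2$, while ruling out the non-reduced possibility $f\sim y^2$; the characteristic-$0$ and algebraic-closure hypotheses on $k$ are used exactly to perform the ``complete the square'' and ``$m$-th root of a unit'' maneuvers that remove all remaining moduli.
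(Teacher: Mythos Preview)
Your proof is correct and follows essentially the same route as the paper's: both directions go through Theorem~\ref{thm171221a}, Cohen's structure theorem pins down $T\cong k[\![z]\!]$ and (when $e(S)=1$) $S\cong k[\![x]\!]$, and the case $e(S)=2$ is reduced to a normal-form statement for a plane curve singularity of multiplicity~2. The only difference is packaging: the paper cites Yoshino~\cite[proof of Theorem~(8.8)]{MR1079937} for the fact that one may take $f=x^e-y^n$, whereas you spell out the Weierstrass preparation, complete-the-square, and root-extraction steps explicitly; your version has the virtue of making transparent exactly where the algebraically-closed and characteristic-$0$ hypotheses enter.
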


\begin{proof}
\eqref{cor171229a2}$\implies$\eqref{cor171229a1}
It is straightforward to show that the ring $k[\![x,y]\!]/(x^2-y^n)$ is complete and reduced with multiplicity 2
when
$n\geq 2$; in the case $n=2$, this uses the fact that the characteristic of $k$ is not 2.
In particular,
this ring is
analytically unramified. 
Since 
$k[\![x]\!]$ is also analytically unramified,
this implication follows from Theorem~\ref{thm171221a}.

\eqref{cor171229a1}$\implies$\eqref{cor171229a2}
Assume that $R$ is Cohen-Macaulay  of finite 
Cohen-Macaulay
type.
Since $R$ has decomposable maximal ideal by assumption,
Theorem~\ref{thm171221a} implies that 
$R\cong S\times_k T$ where
$S$ is an analytically unramified hypersurface of Krull dimension 1 and multiplicity at most 2,
and $T$ is regular of Krull dimension 1.
Since $S$ and $T$ are homomorphic images of the complete, equicharacteristic local ring $R$,
the rings $S$ and $T$ are also complete and equicharacteristic. 
Cohen's structure theorem implies that $T\cong k[\![z]\!]$.

If $e(S)=1$, then $S$ must be regular, since it is unmixed; in this case, we have $S\cong k[\![x]\!]$, as desired. 
So, assume that $e(S)=2$. 
Since $S$ is a complete hypersurface of dimension 1,
we can write
$S\cong k[\![x,y]\!]/(f)$
for some $f\in k[\![x,y]\!]$.
Applying the Weierstrass preparation theorem, we may assume that $f=x^e-y^n$ for some $0<e\le n$; see the proof 
of~\cite[Theorem (8.8)]{MR1079937}.
It follows that $e=e(S)=2$, so $S$ has the desired form.
\end{proof}

Next, we move to the non-Cohen-Macaulay situation.

\begin{thm}\label{thm171229a}
Assume that $R$ has decomposable maximal ideal and
$\dim(R)\leq 1$.
Then the following conditions are equivalent.
\begin{enumerate}[\rm(i)]
\item\label{thm171229a1}
$R$ has finite Cohen-Macaulay type.
\item\label{thm171229a2}
$R\cong S\times_kT$ for local rings $S$ and $T$ satisfying one of the following:
\begin{enumerate}[\rm(1)]
\item $S$ has dimension 1 and finite Cohen-Macaulay type, and $\dim(T)=0$, or
\item $S$ and $T$ have dimension 1 and finite Cohen-Macaulay type such that $e(S)\leq 2$ and $e(T)=1$.
\end{enumerate}
\end{enumerate}
In particular, if $R$ has finite Cohen-Macaulay type, decomposable maximal ideal, and Krull dimension at most 1, then $\dim(R)=1$.
\end{thm}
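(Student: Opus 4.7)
By Fact~\ref{disc161010a}, the decomposability of $\m_R$ gives a non-trivial fiber-product presentation $R\cong S\times_k T$. My plan is to argue (i)$\Rightarrow$(ii) directly from the machinery already established, handle the two subcases of (ii)$\Rightarrow$(i) separately, and extract the ``in particular'' assertion as a by-product of the (i)$\Rightarrow$(ii) analysis.

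For (i)$\Rightarrow$(ii), Lemma~\ref{prop171230a} combined with the hypothesis $\dim(R)\le 1$ forces $\dim(R)=1$, which also establishes the final clause. Then Fact~\ref{para170508b} yields $\max\{\dim S,\dim T\}=1$, so after relabeling I may assume $\dim(S)=1\ge\dim(T)$, and Proposition~\ref{prop171230b} gives finite Cohen-Macaulay type for $S$. If $\dim(T)=0$, I am in case~(1). Otherwise $\dim(T)=1=\dim(S)$, in which case Proposition~\ref{prop171230b} applied with the roles of $S$ and $T$ swapped gives that $T$ has finite Cohen-Macaulay type; combining Lemma~\ref{lem171229a} (applied to $R$) with the multiplicity formula of Fact~\ref{fact171221a} then yields $e(S)+e(T)=e(R)\le 3$, and since both multiplicities are positive integers, swapping $S$ and $T$ if necessary gives $e(S)\le 2$ and $e(T)=1$, which is case~(2).

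For (ii)$\Rightarrow$(i) in case~(2), I would verify the hypotheses of Theorem~\ref{thm171221a}(ii). Since $T$ has $\dim(T)=1$ and finite Cohen-Macaulay type, $T$ is analytically unramified by~\cite[4.15~Proposition]{MR2919145}, hence reduced, so a dimension-one reduced local ring of multiplicity $1$ is a discrete valuation ring. Analogously $S$ is analytically unramified, and the bound $e(S)\le 2$ together with~\cite[Exercise~4.6.14(b)]{bruns:cmr}, exactly as in the proof of Theorem~\ref{thm171221a}, yields $\ecodepth(S)\le 1$; thus $S$ is a hypersurface. Theorem~\ref{thm171221a} now supplies finite Cohen-Macaulay type for $R$.

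For (ii)$\Rightarrow$(i) in case~(1), the analysis is more delicate because $\dim(T)=0$ forces $\depth(R)=0$ via Fact~\ref{para170508b}, so $R$ is not Cohen-Macaulay and Theorem~\ref{thm171221a} is inapplicable. The structural observation I would exploit is that $\m_T$ is nilpotent when regarded as an ideal of $R$: because $\m_S\m_T=0$ inside $R$ and $\m_T$ is nilpotent in $T$, the kernel of the surjection $R\twoheadrightarrow R/\m_T\cong S$ is nilpotent. The plan is then to filter any maximal Cohen-Macaulay $R$-module $M$ along the powers of this kernel, identify each layer $\m_T^iM/\m_T^{i+1}M$ with an $S$-module via the correspondence of Lemma~\ref{lem171230a}, and combine the finite Cohen-Macaulay type of $S$ with extension-controlling arguments to produce only finitely many indecomposable isomorphism classes over $R$. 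This final step is the main obstacle of the proof, since one must carefully bookkeep extension data across the filtration in order to guarantee finiteness.
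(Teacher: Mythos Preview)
Your argument for (i)$\Rightarrow$(ii) matches the paper's. The problem is (ii)$\Rightarrow$(i), and it affects both subcases.

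In case~(2), your invocation of~\cite[4.15~Proposition]{MR2919145} to conclude that $S$ and $T$ are analytically unramified is not valid: that result requires the ring to be Cohen-Macaulay, and nothing in hypothesis~(2) forces $S$ or $T$ to be Cohen-Macaulay. Remark~\ref{disc171230a} gives explicit counterexamples: $T=k[\![x,y]\!]/(x^2,xy)$ has dimension~1, multiplicity~1, and finite Cohen-Macaulay type, yet is not reduced (so certainly not a DVR), and $S=k[\![x,y,z]\!]/(x^2,xy,xz,yz)$ has dimension~1, multiplicity~2, and finite Cohen-Macaulay type, yet is not a hypersurface. Thus Theorem~\ref{thm171221a}(ii) cannot be verified directly from the stated hypotheses, and your case-(2) argument collapses. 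The same issue undermines the use of~\cite[Exercise~4.6.14(b)]{bruns:cmr}, which also needs Cohen-Macaulayness.

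The paper's remedy is to pass uniformly to the reduced quotient. The key input is~\cite[4.16~Theorem]{MR2919145}: a one-dimensional local ring $A$ has finite Cohen-Macaulay type if and only if $A/\Nil(A)$ does and $\m_A^i\cap\Nil(A)=0$ for $i\gg 0$. One checks $\m_R^i\cap\Nil(R)=(\m_S^i\cap\Nil(S))\oplus(\m_T^i\cap\Nil(T))$ and $R/\Nil(R)\cong(S/\Nil(S))\times_k(T/\Nil(T))$. In case~(1), since $T$ is artinian one has $T/\Nil(T)=k$, so $R/\Nil(R)\cong S/\Nil(S)$ inherits finite Cohen-Macaulay type from $S$; the nilpotent-intersection condition follows from that of $S$ together with $\m_T^i=0$ for $i\gg 0$. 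In case~(2), Lemma~\ref{lem171229a} preserves the multiplicities when passing to $S/\Nil(S)$ and $T/\Nil(T)$, and these quotients \emph{are} reduced one-dimensional, hence Cohen-Macaulay; now Theorem~\ref{thm171221a} applies to $R/\Nil(R)$, and the nilpotent-intersection condition again transfers componentwise. This single reduction handles both cases and replaces your unfinished filtration argument in case~(1) entirely.
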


\begin{proof}
Since the maximal ideal of $R$ is decomposable, we may write $R=S\times_kT$, where $(S,\m_S,k)$ and $(T,\m_T,k)$ are local rings with 
$S\ne k\ne T$.
By virtue of~\cite[4.16~Theorem]{MR2919145}, a local ring $A$ of 
Krull dimension 1 has finite Cohen-Macaulay type if and only if so does the ring 
$A/\Nil (A)$ and one has $\m_A^i\cap\Nil (A)=0$ for $i\gg0$.
A straightforward computation shows that $\m_R^i\cap\Nil (R)=(\m_S^i\cap\Nil (S))\oplus(\m_T^i\cap\Nil (T))$ for each $i>0$.
This    implies that $\m_R^i\cap\Nil (R)=0$ for $i\gg0$ if and only if $\m_S^i\cap\Nil (S)=0=\m_T^i\cap\Nil (T)$ for $i\gg0$.
Also, note that if a local ring $A$ has Krull dimension $1$, then $A/\Nil (A)$ is a Cohen-Macaulay local ring of Krull dimension $1$, as it is reduced.
We use these facts frequently in the following.

\eqref{thm171229a1}$\implies$\eqref{thm171229a2}
Assume that $R$ has finite Cohen-Macaulay type,
and recall the assumption $\dim(R)\leq 1$. 
Lemma~\ref{prop171230a} then implies that $\dim(R)=1$.
By Fact~\ref{para170508b}  we may re-order $S$ and $T$ if necessary
to assume that one of the following holds:
\begin{enumerate}[\quad(I)]
\item\label{item171230a}
$\dim S=1$ and $\dim T=0$, or
\item\label{item171230b}
$\dim S=1=\dim T$.
\end{enumerate}
In the first of these cases, Proposition~\ref{prop171230b} implies that $S$ has finite Cohen-Macaulay type,
as desired.
In the second case, Proposition~\ref{prop171230b} implies that both $S$ and $T$ have finite Cohen-Macaulay type.
Fact~\ref{fact171221a}
and Lemma~\ref{lem171229a} 
imply that $e(S)+e(T)=e(R)\leq 3$; thus, in this case we 
may re-order $S$ and $T$ if necessary
to conclude that $e(S)\leq 2$ and $e(T)=1$, as desired.

\eqref{thm171229a2}$\implies$\eqref{thm171229a1}
Assume now that $S$ and $T$ satisfy condition~(1) or~(2) from the statement of the theorem.

Case~(1): $S$ has dimension 1 and finite Cohen-Macaulay type, and $\dim(T)=0$.
Our assumptions on $S$ here imply that $S/\Nil (S)$
has finite Cohen-Macaulay type and $\m_S^i\cap\Nil (S)=0$ for $i\gg0$.
It follows that the ring 
$$R/\Nil (R)\cong(S/\Nil (S))\times_k(T/\Nil (T))\cong S/\Nil (S)$$
has finite Cohen-Macaulay type.
Since $\m_T^i=0$ for $i\gg 0$, we also have $\m_R^i\cap\Nil (R)=0$ for $i\gg0$, so $R$ has finite Cohen-Macaulay type.

Case~(2): $S$ and $T$ have dimension 1 and finite Cohen-Macaulay type such that $e(S)\leq 2$ and $e(T)=1$.
It follows that $S/\Nil (S)$ and $T/\Nil (T)$
are both Cohen-Macaulay of Krull dimension 1 with finite Cohen-Macaulay type. 
Lemma~\ref{lem171229a} 
implies that $e(S/\Nil(S))=e(S)\leq 2$ and $e(T/\Nil(T))=e(T)=1$,
so Fact~\ref{fact171221a} says
$$e(R/\Nil (R))=e((S/\Nil (S))\times_k(T/\Nil (T)))=e(S/\Nil (S))+e(T/\Nil (T))\leq 3
$$
Theorem~\ref{thm171221a} implies
that $R/\Nil (R)\cong(S/\Nil (S))\times_k(T/\Nil (T))$ has finite Cohen-Macaulay type. 
In addition, we have $\m_S^i\cap\Nil (S)=0=\m_T^i\cap\Nil (T)$ for $i\gg0$,
so $\m_R^i\cap\Nil (R)=0$ for $i\gg0$.
We conclude that $R$ has finite Cohen-Macaulay type, as desired. 
\end{proof}

\begin{disc}\label{disc171230a}
In Theorem~\ref{thm171229a} note that if $R\cong S\times_kT$ has finite Cohen-Macaulay type,
then it does not necessarily follow that $S$ and $T$ both have finite Cohen-Macaulay type. 
Indeed if $S$ has dimension 1 and finite Cohen-Macaulay type, and $\dim(T)=0$,
then $R$ has finite Cohen-Macaulay type by Theorem~\ref{thm171229a}.
However, in this case, $T$ has finite Cohen-Macaulay type if and only if $T$ is a principal ideal ring
(that is, if and only if $\edim(T)=1$) by~\cite[3.3~Theorem]{MR2919145}.
So, for example, if $T=k[\![x,y]\!]/(x,y)^2$, then $R$ has finite Cohen-Macaulay type, but $T$ has not.
See Proposition~\ref{prop171230b} for more about this sort of behavior.

It is also worth noting that in case~\eqref{thm171229a2}(2) of Theorem~\ref{thm171229a}, $S$ and $T$ may not be Cohen-Macaulay;
in particular, $S$ may not be a hypersurface and $T$ may not be regular, in spite of their small multiplicities.
Indeed, the  ring $T=k[\![x,y]\!]/(x^2,xy)=k[\![x,y]\!]/((x)\cap(x,y)^2)$ has $\dim(T)=1=e(T)$ and finite Cohen-Macaulay type;
see~\cite[p.~53]{MR2919145}.
A similar argument as in \emph{loc.\ cit.} shows that the ring
$S=k[\![x,y,z]\!]/(x^2,xy,xz,yz)=k[\![x,y,z]\!]/((x,y)\cap(x,z)\cap(x,y,z)^2)$
has $\dim(S)=1$ and $e(S)=2$ and finite Cohen-Macaulay type.
\end{disc}

\section*{Acknowledgments} 
We are grateful to 
Mohsen Asgharzadeh, Ananthnarayan Hariharan, and Tony Se 
for helpful discussions about this work.

\providecommand{\bysame}{\leavevmode\hbox to3em{\hrulefill}\thinspace}
\providecommand{\MR}{\relax\ifhmode\unskip\space\fi MR }
\providecommand{\MRhref}[2]{%
  \href{http://www.ams.org/mathscinet-getitem?mr=#1}{#2}
}
\providecommand{\href}[2]{#2}

\end{document}